\newtheorem{Df}{Definition}
\newtheorem{theorem}[Df]{Theorem}
\newtheorem{lemma}[Df]{Lemma}
\newtheorem{corollary}[Df]{Corollary}
\numberwithin{Df}{subsection}
\numberwithin{equation}{subsection}
\newcommand{\coev}{\ensuremath{\operatorname{coev}} }
\newcommand{\ev}{\ensuremath{\operatorname{ev}} }
\newcommand{\tev}{\widetilde{\operatorname{ev}}}
\newcommand{\tcoev}{\widetilde{\operatorname{coev}}}
\newcommand{\Pu}{\ensuremath{{P_\unit}}}
\newcommand{\C}{\ensuremath{\mathbb{C}}}
\newcommand{\Z}{\ensuremath{\mathbb{Z}}}
\newcommand{\gl}{\ensuremath{\mathfrak{gl}(m|n)}}
\newcommand{\g}{\ensuremath{\mathfrak{g}}}
\newcommand{\cat}{\mathcal{C}}
\newcommand{\ideal}{I}
\newcommand{\ob}{Ob(\mathcal{C})}
\newcommand{\End}{\operatorname{End}}
\newcommand{\Hom}{\operatorname{Hom}}
\newcommand{\Id}{\operatorname{Id}}
\newcommand{\proj}{\ensuremath{\operatorname{e}} }
\newcommand{\kt}{$\Bbbk$\nobreakdash-\hspace{0pt}}
\newcommand{\kk}{\Bbbk}
\newcommand{\mt}{\operatorname{\mathsf{t}}}
\newcommand{\Fcat}{\mathcal{F}}
\newcommand{\Dcat}{\mathcal{D}}
\newcommand{\Proj}{\ensuremath{\mathcal{P}roj}}
\newcommand{\fg}{\ensuremath{\mathfrak{g}}}
\newcommand{\fq}{\ensuremath{\mathfrak{q}}}
\newcommand{\fh}{\ensuremath{\mathfrak{h}}}
\newcommand{\fb}{\ensuremath{\mathfrak{b}}}
\newcommand{\0}{\ensuremath{\bar{0}}}
\newcommand{\1}{\ensuremath{\bar{1}}}
\newcommand{\unit}{\ensuremath{\mathds{1}}}
\newcommand{\St}{\ensuremath{\operatorname{St}}}
\newcommand{\FK}{\kk}
\begin{document}
\title[Ambidextrous objects and nontrivial trace functions]{Ambidextrous objects and trace functions for nonsemisimple categories}

\author{Nathan Geer}
\address{Mathematics \& Statistics\\
  Utah State University \\
  Logan, Utah 84322, USA}
\thanks{Research of the first author was partially supported by NSF grants
 DMS-0968279 and DMS-1007197.}\
\email{nathan.geer@usu.edu}
\author{Jonathan Kujawa}
\address{Mathematics Department\\
University of Oklahoma\\
Norman, OK 73019}
\thanks{Research of the second author was partially supported by NSF grant
DMS-0734226 and NSA grant H98230-11-1-0127.}\
\email{kujawa@math.ou.edu}
\author{Bertrand Patureau-Mirand}
\address{LMAM, universit\'e de Bretagne-Sud, universit\'e europ\'eenne de
  Bretagne, BP 573, 56017 Vannes, France }
\email{bertrand.patureau@univ-ubs.fr}
\date{\today}

\begin{abstract} 
  We provide a necessary and sufficient condition for a simple object in a
  pivotal \kt category to be ambidextrous. In turn, these objects imply the existence of nontrivial trace functions in the category.  These functions play an important role in low-dimensional topology as well as in studying the category itself.   In particular, we prove
  they exist for factorizable ribbon Hopf algebras, modular representations of
  finite groups and their quantum doubles, complex and modular Lie
  (super)algebras, the $(1,p)$ minimal model in conformal field theory, and
  quantum groups at a root of unity.
 \end{abstract}

\maketitle
\setcounter{tocdepth}{1}

\section{Introduction}

\subsection{} Let $\cat$ be a category with a tensor product and duality.  Assuming $\cat$ satisfies certain minimal axioms, then one can use the tensor product and duality structure on $\cat$ to define (categorical) traces of endomorphisms and (categorical) dimensions of objects.   These trace and dimension functions are powerful tools for studying $\cat$.  The existence of trace and dimension functions also allows one to use $\cat$ to construct invariants of knots, links, 3-manifolds, and other objects in low-dimensional topology (e.g.\  see \cite{Tu}). 

However, in many situations of great interest the category is not semisimple and these functions are trivial.  This occurs, for example, for the typical simple supermodules of a complex Lie superalgebra and for certain representations of a quantum group at a root of unity.  It is desirable to find a suitable replacement for trace and dimension in these settings.  Addressing this question, the first and third authors of this paper showed by direct calculation that the typical simple supermodules for the Lie superalgebras of type A and C admit modified trace and dimension functions and that these in turn give rise to nontrivial link invariants \cite{GP2}.  Geer and Patureau-Mirand have since worked with a number of coauthors to obtain further examples of modified traces.  They have successfully used these functions to define invariants of knots, links, 3-manifolds, and other low-dimensional objects in topology.

Most recently, in \cite{GKT} and \cite{GPV} this approach is used to
understand and generalize the ``state sum'' invariants of 3-manifolds
introduced by Turaev-Viro and Kashaev.  As an outcome of their investigations
they obtain
3-manifold invariants, ``relative Homotopy Quantum Field Theories'',
and a generalization of Kashaev's quantum dilogarithm invariant which 
was introduced in his foundational paper where he first stated the 
volume conjecture.   
 
In part motivated by these 
 developments in quantum topology, the authors of the present paper unified and generalized the notion of modified trace and dimension functions to the setting of ribbon categories in \cite{GKP}.  In particular, we showed that these functions also give unexpected new insights into purely representation theoretic questions.  For example, they provide a natural generalization of the well known conjecture of Kac and Wakimoto on the superdimension of simple supermodules for complex Lie superalgebras \cite{KW}.   Recently, Serganova proved our generalized Kac-Wakimoto conjecture for $\gl$ along with the ordinary Kac-Wakimoto conjecture for the basic classical Lie superalgebras \cite{serganova4}.  In turn, the generalized Kac-Wakimoto conjecture is a key ingredient in the forthcoming calculation of complexity for the simple $\gl$-supermodules \cite{BKN4}.

A simple object admits a nontrivial trace if and only if it is ambidextrous (a condition on morphisms).  One surprising outcome of \cite{GKP} was the discovery that ambidextrous objects can fail to exist even in a natural setting like the finite-dimensional representations of $\mathfrak{sl}_{2}(\kk)$ over a field of positive characteristic.

The main result of this paper is the reformulation of ambidextrous into a condition on objects.  This provides a new perspective on what it means for an object to be ambidextrous.   Furthermore, the new condition can easily be verified in a wide variety of settings.  As a consequence we recover results computed in \cite{GKP}, \cite{GP}, and \cite{GP2} as well as a large number of previously unknown examples.  A striking outcome of the main theorem and the subsequent examples is the fact that ambidextrous objects seem to be quite plentiful in nature. In particular, we show they exist for representations of factorizable ribbon Hopf algebras, finite groups and their quantum doubles, Lie (super)algebras, the $(1,p)$ minimal model in conformal field theory, and quantum groups at a root of unity.
\subsection{Acknowledgements}  The second author is grateful to David Hemmer, Christopher Drupieski, and Christopher Bendel for helpful conversations.

\section{Traces on Pivotal  $\kk$-categories}

\subsection{Pivotal categories} 
We recall the definition of a pivotal tensor category, see for instance,
\cite{BW}. A \emph{tensor category} $\cat$ is a category equipped with a
covariant bifunctor $\otimes :\cat \times \cat\rightarrow \cat$ called the
tensor product, an associativity constraint, a unit object $\unit$, and left
and right unit constraints such that the Triangle and Pentagon Axioms hold.
When the associativity constraint and the left and right unit constraints are
all identities we say that $\cat$ is a \emph{strict} tensor category. By
MacLane's coherence theorem, any tensor category is equivalent to a strict
tensor category. To simplify the exposition, we formulate further definitions
only for strict tensor categories; the reader will easily extend them to
arbitrary tensor categories.  In what follows we adopt the convention that $fg$ will denote the composition of morphisms $f \circ g$.

A strict tensor category $\cat$ has a \emph{left duality} if for each object
$V$ of $\cat$ there is an object $V^*$ of $\cat$ and morphisms
\begin{equation}\label{lele}
  \coev_{V} : \:\:   \unit \rightarrow V\otimes V^{*} \quad {\rm {and}} \quad
   \ev_{V}: \:\:
  V^*\otimes V\rightarrow \unit
\end{equation}
such that
\begin{align*}
  (\Id_V\otimes \ev_V)(\coev_V \otimes \Id_V)&=\Id_V & & {\rm {and}} &
  (\ev_V\otimes \Id_{V^*})(\Id_{V^*}\otimes \coev_V)&=\Id_{V^*}.
\end{align*}
A left duality determines for every morphism $f:V\to W$ in $\cat$ the dual (or 
transpose) morphism $f^*:W^*\rightarrow V^*$ by
$$
f^*=(\ev_W \otimes \Id_{V^*})(\Id_{W^*} \otimes f \otimes
\Id_{V^*})(\Id_{W^*}\otimes \coev_V),
$$
and determines for any objects $V,W$ of $\cat$, an isomorphism
$\gamma_{V,W}: W^*\otimes V^* \rightarrow (V\otimes W)^*$ by
$$
\gamma_{V,W} = (\ev_W\otimes \Id_{(V\otimes W)^*})(\Id_{W^*} \otimes \ev_V \otimes
\Id_W \otimes \Id_{(V\otimes W)^*})(\Id_{W^*}\otimes \Id_{V^*} \otimes
\coev_{V\otimes W}).
$$

Similarly, $\cat$ has a \emph{right duality} if for each object $V$ of $\cat$
there is an object $ V^\bullet$ of $\cat$ and morphisms
\begin{equation}\label{roro}
  \tcoev_{V} : \:\:   \unit\rightarrow V^\bullet\otimes V \quad {\rm {and}} \quad
  \tev_{V}:\:\:   V\otimes V^\bullet\rightarrow \unit
\end{equation}
such that
\begin{align*}
  (\Id_{V^\bullet}\otimes \tev_V)(\tcoev_V \otimes \Id_{V^\bullet})&=\Id_{V^\bullet}
  & & {\rm {and}} & (\tev_V\otimes \Id_{V})(\Id_{V}\otimes \tcoev_V)&=\Id_{V}.
\end{align*}
The right duality determines for every morphism $f:V\to W$ in $\cat$ the dual
morphism $f^\bullet:W^\bullet\rightarrow V^\bullet$ by
$$
f^\bullet=(\Id_{V^\bullet} \otimes \tev_W ) (\Id_{V^\bullet} \otimes f \otimes
\Id_{W^\bullet})( \tcoev_V \otimes \Id_{W^\bullet}),
$$
and determines for any objects $V,W$, an isomorphism $\gamma'_{V,W}:
W^\bullet\otimes V^\bullet \rightarrow (V\otimes W)^\bullet$ by
$$
\gamma'_{V,W}
= ( \Id_{(V\otimes W)^\bullet} \otimes \tev_V )(\Id_{(V\otimes W)^\bullet}
\otimes \Id_{V} \otimes \tev_W \otimes \Id_{V^\bullet} )(\tcoev_{V\otimes W}\otimes
\Id_{W^\bullet}\otimes \Id_{V^\bullet}).
$$

A \emph{pivotal category} is a tensor category with left duality $\{{\coev_V},
\ev_V\}_V $ and right duality $\{{\tcoev_V}, \tev_V\}_V $ which are compatible
in the sense that $V^*=V^\bullet$, $f^*=f^\bullet$, and
$\gamma_{V,W}=\gamma'_{V,W}$ for all $V, W, f$ as above.  Every pivotal
category
gives a  natural tensor isomorphism
\begin{equation}\label{E:DefphiV}
\phi=\{\phi_V=(\tev_{V}\otimes\Id_{V^{**}})(\Id_V\otimes\coev_{V^{*}})\colon V\to
V^{**}\}_{V \in \cat}.
\end{equation}

\subsection{Ribbon categories} We now relate the above setup to  ribbon categories. 
 A \emph{braiding} on a tensor category $\cat$ consists of a family of isomorphisms $\{c_{V,W}: V \otimes W \rightarrow W\otimes V \}$
satisfying the Hexagon Axiom \cite[XIII.1 (1.3-1.4)]{Kas} and the naturality condition expressed in the commutative diagram \cite[(XIII.1.2)]{Kas}, where $V,W$ run over objects of $\cat$.
We say a tensor category is \emph{braided} if it has a braiding.  We call the braiding on a tensor category \emph{symmetric} if $c_{W, V}  c_{V,W} = \Id_{V \otimes W}$ for all $V$ and $W$ in $\cat$.

A \emph{twist} in a braided tensor category $\cat$ with  duality is a family $\{ \theta_{V}:V\rightarrow V \}$ of natural isomorphisms defined for each object $V$ of $\cat$ satisfying relations \cite[(XIV.3.1-3.2)]{Kas}.    A \emph{ribbon category} is a braided tensor category with left duality and a twist.   Note that a ribbon category has a natural right duality given by 
\begin{align*}
\tcoev_V &=(\Id_{V^*}\otimes \theta_V)  c_{V,V^*}  b_V, &
\tev_V & = d_V  c_{V,V^*}  (\theta_V\otimes \Id_{V^*}).
\end{align*}
Moreover, this right duality is compatible 
with the left duality and defines a pivotal structure (cf.\ \cite[Section 2.2]{BK}). 

\subsection{Tensor \kt categories}
Let $\kk$ be a commutative ring.
A \emph{tensor \kt category} is a tensor category $\cat$ such that its
 hom-sets are left \kt modules, the composition and tensor product of morphisms are \kt bilinear,
  and  $\End_\cat(\unit)$ is a free \kt module of rank one.
Then the map $\kk \to \End_\cat(\unit), k \mapsto k \, \Id_\unit$  is a
\kt algebra isomorphism. It is used to identify $\End_\cat(\unit)=\kk$.
An object $V$ of a tensor \kt category $\cat$ is
\emph{absolutely simple} if $\End_\cat(V)$ is a free \kt module of rank one.
Equivalently, $V$ is  absolutely simple if the \kt homomorphism $\kk \to
\End_\cat(X),\, k   \mapsto  k\, \Id_X$  is an isomorphism.  
 If $V$ is absolutely simple, it is used to identify $\End_\cat(V)=\kk$. By the
definition of a tensor \kt category, the unit object $\unit$ is
 absolutely simple.  We call an object $V$ of $\cat$ \emph{absolutely indecomposable} if 
\[
\End_{\cat}(V)/\operatorname{Rad}(\End_{\cat}(V)) \cong \FK.
\]

\subsection{Traces}  
 We now recall the notion of a trace on an ideal in a pivotal \kt category.  For more details see \cite{GPV}.   By a \emph{right ideal} of  $\cat$ we mean a full subcategory, $\ideal$, of $\cat$ such that:  
\begin{enumerate}
\item  If $V$ is an object of $\ideal$ and $W$ is any object of $\cat$, then $V\otimes W$ is an object of $\ideal$.
\item If $V$ is an object of $\ideal$, $W$ is any object of $\cat$, and there exists morphisms  $f:W\to V$,  $g:V\to W$ such that $g  f=\Id_W$, then $W$ is an object of $\ideal$.
\end{enumerate}

If $\ideal$ is a right ideal in a pivotal \kt category $\cat$ 
then a \emph{right trace} on $\ideal$ is a family of linear functions
$$\{\mt_V:\End_\cat(V)\rightarrow \FK \}$$
where $V$ runs over all objects of $\ideal$ and such that following two conditions hold.
\begin{enumerate}
\item  If $U\in \ideal$ and $W\in \ob$ then for any $f\in \End_\cat(U\otimes W)$ we have
\begin{equation*}
\mt_{U\otimes W}\left(f \right)=\mt_U \left( (\Id_U\otimes \tev_W)(f\otimes \Id_{W^*})(\Id_U\otimes \coev_W)   \right).
\end{equation*}
\item  If $U,V\in \ideal$ then for any morphisms $f:V\rightarrow U $ and $g:U\rightarrow V$  in $\cat$ we have 
\begin{equation*}
\mt_V(g f)=\mt_U(f  g).
\end{equation*} 
\end{enumerate}

\subsection{Ambidextrous objects} 
An  absolutely simple object $V$ in a pivotal \kt category $\cat$ is said to be \emph{right ambidextrous} if
\[
f\coev_V =(\phi_V^{-1} \otimes \Id_{V^*})f^*\tcoev_{V^*}
\]
for all $f \in \End_\cat(V \otimes V^*)$ where $\phi_V$ is given in \eqref{E:DefphiV}.  This definition is equivalent to several other definitions, see \cite[Lemma~9]{GPV}.  In particular, when $\cat$ is a ribbon \kt category the definition of a right ambidextrous object is equivalent to the definition of an ambidextrous object given in \cite{GKP}.  For short we say $V$ is \emph{right ambi} if $V$ is right ambidextrous. 

Let $\ideal_V$ be the full subcategory of all objects $U$ satisfying  the
property that there exists an object $W$ and morphisms $\alpha:U\rightarrow
V\otimes W$ and $\beta:V\otimes W \rightarrow U$ with $\beta \alpha=\Id_U$.
It is not difficult to verify $\ideal_{V}$ forms a right ideal.    Combining \cite[Lemma 9(b)]{GPV} and  \cite[Theorem 7(a)]{GPV}, if $V$ is a right ambi object, then the canonical map $\End_{\cat}(V) \to \FK $ extends uniquely to a right trace on $\ideal_{V}$.  In particular, since on $\End_{\cat}(V)$ this right trace coincides with the canonical map  it follows that it is necessarily nonzero.  In short we have the following result.

\begin{theorem}\label{T:idealtheorem}
  If $\cat$ is a pivotal \kt category and ${V}$ is a right ambi object in
  $\cat$, then there is a unique non-zero right trace on $\ideal_{V}$ up to
  multiplication by an element of $\kk$.
\end{theorem}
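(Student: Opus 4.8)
The quickest route is to quote the two results recalled just above the theorem: by \cite[Lemma~9(b)]{GPV} and \cite[Theorem~7(a)]{GPV} the canonical map $\End_{\cat}(V)\to\FK$ extends, uniquely, to a right trace on $\ideal_{V}$; this trace is nonzero because its restriction to $\End_{\cat}(V)$ is an isomorphism while $\FK\neq 0$, and any right trace $\mt'$ on $\ideal_{V}$ restricts on $\End_{\cat}(V)=\FK$ to multiplication by the scalar $\lambda:=\mt'_V(\Id_V)$, whence $\mt'=\lambda\mt$. For a self-contained proof I would rebuild the trace directly. The plan is: (a) for every object $W$ take the right partial trace over $W$, obtaining a linear map $t_W\colon\End_{\cat}(V\otimes W)\to\End_{\cat}(V)=\FK$; (b) record two properties of the family $\{t_W\}$, namely a partial-trace compatibility valid in every pivotal category and a \emph{sliding property} which is the one place where right ambidexterity of $V$ is used; (c) define $\mt_U$ on $\ideal_{V}$ by pulling $t_W$ back along a retraction $V\otimes W\to U$; and (d) deduce well-definedness, the two right-trace axioms, non-triviality, and uniqueness from (b).

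Concretely, in (a) one sets
\[
  t_W(f)=(\Id_V\otimes\tev_W)(f\otimes\Id_{W^{*}})(\Id_V\otimes\coev_W)\in\End_{\cat}(V)=\FK ,
\]
which is legitimate since $V$ is absolutely simple. For (b): the compatibility reads $t_{W\otimes W'}(f)=t_W\bigl((\Id_{V\otimes W}\otimes\tev_{W'})(f\otimes\Id_{(W')^{*}})(\Id_{V\otimes W}\otimes\coev_{W'})\bigr)$ and follows from functoriality of the duality together with the identification $(W\otimes W')^{*}\cong(W')^{*}\otimes W^{*}$; the sliding property asserts that $t_W(gf)=t_{W'}(fg)$ whenever $f\colon V\otimes W\to V\otimes W'$ and $g\colon V\otimes W'\to V\otimes W$. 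Granting both, for $U\in\ideal_{V}$ choose $\alpha\colon U\to V\otimes W$ and $\beta\colon V\otimes W\to U$ with $\beta\alpha=\Id_U$, and put $\mt_U(h)=t_W(\alpha h\beta)$ for $h\in\End_{\cat}(U)$. Independence of the chosen presentation, the right-trace axioms, the normalization $\mt_V=\Id_{\FK}$ (take $W=\unit$ and $\alpha=\beta=\Id_V$), and uniqueness are then short manipulations with $\beta\alpha=\Id_U$, the compatibility, and the sliding property. For example, given a second presentation $(\alpha',\beta',W')$ of $U$, the morphisms $x=\alpha h\beta'\colon V\otimes W'\to V\otimes W$ and $y=\alpha'\beta\colon V\otimes W\to V\otimes W'$ satisfy $xy=\alpha h\beta$ and $yx=\alpha' h\beta'$, so the sliding property yields $t_W(\alpha h\beta)=t_{W'}(\alpha' h\beta')$; the cyclicity axiom is obtained the same way, and the partial-trace axiom uses the compatibility in addition. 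Finally, for any right trace $\mt'$ and $\lambda=\mt'_V(\Id_V)$, axioms (2) and (1) force $\mt'_U(h)=\mt'_{V\otimes W}(\alpha h\beta)=\lambda\,t_W(\alpha h\beta)=\lambda\,\mt_U(h)$, so $\mt'=\lambda\mt$; taking $\lambda=1$ gives a nonzero right trace.

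The main obstacle is the sliding property, and this is exactly where the hypothesis that $V$ is right ambi cannot be avoided: the $V$-strand in $t_W(gf)$ is an \emph{open} strand, not a loop, so $f$ and $g$ cannot be interchanged for free, and in a non-spherical pivotal category the two sides genuinely differ for a general simple object. My plan for this step is to bend the $W$- and $W'$-legs of $f$ and $g$ down over the two partial traces, which turns the desired equality into an identity involving only the duality maps of $V$ itself applied to a morphism in $\End_{\cat}(V\otimes V^{*})$ induced by $f$ and $g$; at that point the identity is precisely the defining relation of a right ambi object displayed above, or one of its equivalent formulations collected in \cite[Lemma~9]{GPV}. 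Carrying out this reduction while correctly tracking the coherence isomorphism $\gamma$ and the natural transformation $\phi$ is the technical heart of the theorem, and is the content of \cite[Lemma~9(b)]{GPV}.
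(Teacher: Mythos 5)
Your proposal is correct and follows the paper's own route exactly: the paper likewise just cites \cite[Lemma~9(b)]{GPV} and \cite[Theorem~7(a)]{GPV} to obtain the right trace on $\ideal_{V}$ extending the canonical map $\End_\cat(V)\to\FK$, and deduces non-triviality from the fact that this trace agrees with that canonical map on $\End_\cat(V)$. Your additional material---the partial-trace maps $t_W$, the sliding property where right ambidexterity enters, and the scalar-comparison argument showing any right trace $\mt'$ equals $\lambda\mt$ with $\lambda=\mt'_V(\Id_V)$---is an accurate unpacking of what those GPV results prove and correctly upgrades ``unique extension of the canonical map'' to ``unique up to an element of $\kk$''.
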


\subsection{Variations}  In a pivotal \kt category there are also natural notions of left and two-sided ideals, left and two-sided traces, and left ambidextrous objects.  See \cite{GPV} for details.  In this paper we only considered the right-handed version of these concepts and leave the other variants to the interested reader.   When the category is a ribbon category, the various notions coincide and are equivalent to the definitions given in \cite{GKP}.  In this case we drop the adjective ``right'' for brevity.

\section{Main Theorem}
\subsection{}\label{SS:MainTheorem}  For the reminder of the paper we assume $\kk$ is a field.  We also assume that $\cat$ is an additive pivotal \kt category where every indecomposable object $V$ in $\cat$ is absolutely indecomposable and all elements of the radical of $\End_{\cat}(V)$ are nilpotent.  In particular, $\End_{\cat}(V)$ is a local ring.   We remark that these assumptions are known to imply that the Krull-Schmidt Theorem holds in $\cat$.

For example, by Fitting's Lemma our assumptions hold whenever $\kk$ is algebraically closed and $\cat$ is an abelian \kt category with all objects having finite length.  For a nonabelian example, we note the conditions also hold for Deligne's category $\operatorname{Rep}(S_{t})$ \cite{CO}.  See \cite{CK} for a description of trace and dimension functions in $\operatorname{Rep}(S_{t})$.

Given an object $V$ in $\cat$, we fix a direct sum decomposition of $V\otimes V^*$ into indecomposable objects
$W_i$ indexed by a set $I$: 
\begin{equation}\label{E:Decomp}
V\otimes V^* = \bigoplus_{k\in I}W_k.
\end{equation}
We write $i_{k}: W_{k}\to V\otimes V^{*}$ and $p_{k}: V \otimes V^{*} \to
W_{k}$ for the 
byproduct morphisms corresponding to this decomposition.  In particular,
$p_{k}i_{k} = \Id_{W_{k}}$ for all $k \in I$ and if we set
\[
\proj_k = i_{k}p_{k}:V\otimes V^*\to V\otimes V^*,
\]
then $\{\proj_{k} \}_{k\in I}$ is a pairwise orthogonal set of idempotents in
$\End_{\cat}(V \otimes V^{*})$ which sum to the identity.

\begin{lemma}\label{L:indecomp}  
  Let $V$ be an absolutely simple object in $\cat$.  Then there is a unique $j
  \in I$ which satisfies the following equivalent conditions:
  \begin{enumerate}
  \item $  \proj_j  \coev_V= \coev_V$; 
  \item
    $\Hom_{\cat}(\unit , W_{j}) $ is non-zero and is spanned by $p_{j}
    \coev_{V}$.
  \end{enumerate}   
  There is also a unique $j' \in I$ which satisfies the following equivalent
  conditions:
  \begin{enumerate}
  \item $ \widetilde{\ev}_V \proj_{j'}=\widetilde{\ev}_V;$ 
  \item 
    $\Hom_{\cat}(W_{j'} , \unit)$ is non-zero and is spanned by
    $\widetilde{\ev}_{V} i_{j'}$.
  \end{enumerate}
\end{lemma}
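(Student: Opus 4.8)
The plan is to identify the distinguished index $j$, and symmetrically $j'$, by a rank-one computation on a $\Hom$-space combined with bookkeeping along the decomposition \eqref{E:Decomp}. First I would record the standard duality adjunction: since $V$ is absolutely simple, $\End_\cat(V)=\kk$, and the $\kk$-linear maps $f\mapsto(\Id_V\otimes\ev_V)(f\otimes\Id_V)$ and $g\mapsto(g\otimes\Id_{V^*})\coev_V$ are mutually inverse bijections between $\Hom_\cat(\unit,V\otimes V^*)$ and $\End_\cat(V)$; this is immediate from the two zig-zag identities for the left duality $\{\coev_V,\ev_V\}$, and under them $\coev_V$ corresponds to $\Id_V$. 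Hence $\Hom_\cat(\unit,V\otimes V^*)$ is one-dimensional over $\kk$ and spanned by $\coev_V$, and in particular $\coev_V\neq 0$ (which also follows directly from the first zig-zag identity). Running the mirror argument with the right duality $\{\tcoev_V,\tev_V\}$ --- available precisely because $\cat$ is pivotal, so that $V^\bullet=V^*$ --- via the mutually inverse maps $g\mapsto(g\otimes\Id_V)(\Id_V\otimes\tcoev_V)$ and $h\mapsto\tev_V(h\otimes\Id_{V^*})$, I get likewise that $\Hom_\cat(V\otimes V^*,\unit)$ is one-dimensional, spanned by $\tev_V\neq 0$.

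Next I would feed in \eqref{E:Decomp}. Applying $\Hom_\cat(\unit,-)$ to $V\otimes V^*=\bigoplus_{k\in I}W_k$ gives $\Hom_\cat(\unit,V\otimes V^*)=\bigoplus_{k\in I}\Hom_\cat(\unit,W_k)$, with $k$-th summand the set of morphisms of the form $p_k\circ(-)$. Since the total space is one-dimensional, there is a unique index $j\in I$ with $\Hom_\cat(\unit,W_j)\neq 0$, and then $\Hom_\cat(\unit,W_j)$ is one-dimensional while $\Hom_\cat(\unit,W_k)=0$ for $k\neq j$. For this $j$: writing $\coev_V=\sum_{k\in I}i_kp_k\coev_V$, each $p_k\coev_V$ lies in $\Hom_\cat(\unit,W_k)$ and hence vanishes unless $k=j$, so $\coev_V=i_jp_j\coev_V=\proj_j\coev_V$, which is condition (1); moreover $p_j\coev_V\neq 0$, since otherwise $\coev_V=0$, so $p_j\coev_V$ spans the one-dimensional space $\Hom_\cat(\unit,W_j)$, which is condition (2).

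It then remains to check that (1) and (2) isolate the same index, i.e.\ that they are equivalent for an arbitrary $k\in I$, so that uniqueness in (1) follows from uniqueness in (2). If $\proj_k\coev_V=\coev_V$ then $i_kp_k\coev_V=\coev_V\neq 0$, so $p_k\coev_V$ is a nonzero vector of $\Hom_\cat(\unit,W_k)$; by the rank-one count this space is then one-dimensional and spanned by $p_k\coev_V$, giving (2). Conversely, if $\Hom_\cat(\unit,W_k)\neq 0$ then $k$ is the unique such index, so $p_\ell\coev_V=0$ for all $\ell\neq k$ and hence $\coev_V=\sum_{\ell}i_\ell p_\ell\coev_V=\proj_k\coev_V$, giving (1). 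The assertion about $j'$ is the exact mirror image: the identical argument with $\Hom_\cat(-,\unit)$ replacing $\Hom_\cat(\unit,-)$, $\tev_V$ replacing $\coev_V$, and the injections $i_{j'}$ replacing the projections $p_j$, using $\Hom_\cat(V\otimes V^*,\unit)=\kk\,\tev_V$ from the first step.

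I do not anticipate a genuine obstacle: the whole content is the rank-one identification of $\Hom_\cat(\unit,V\otimes V^*)$, which requires only absolute simplicity of $V$ together with the duality axioms, plus additivity of $\cat$ to split the $\Hom$-space along \eqref{E:Decomp}. The points that want care are purely organizational --- using the left duality $\{\coev_V,\ev_V\}$ for the $\unit\to V\otimes V^*$ side and the right duality $\{\tcoev_V,\tev_V\}$ for the $V\otimes V^*\to\unit$ side (this is where pivotality is used), and observing that the implication (2)$\Rightarrow$(1) needs only non-vanishing of $\Hom_\cat(\unit,W_j)$, whereas (1)$\Rightarrow$(2) is what produces the spanning statement.
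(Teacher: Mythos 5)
Your proposal is correct and follows essentially the same route as the paper's proof: both identify $j$ by observing that $\Hom_\cat(\unit, V\otimes V^*)$ is one-dimensional (you spell out the duality adjunction establishing this, where the paper simply cites absolute simplicity of $V$), split along the decomposition by additivity, and use nonvanishing of $p_j\coev_V$ to pin down the index, with the mirror argument for $j'$. The extra care you take in checking the equivalence of conditions (1) and (2) for an arbitrary index, and in noting where pivotality supplies $\tev_V:V\otimes V^*\to\unit$, is a sound elaboration of what the paper leaves as ``straightforward.''
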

\begin{proof} 
  Using that the elements $\proj_{j}$ are a pairwise orthogonal set of
  idempotents which sum to the identity and that $V$ is absolutely simple, it
  is straightforward to verify that there is a unique $j$ such that $ \proj_j
  \coev_V= \coev_V$.  Using additivity we have
  \[
  \Hom_{\cat}(\unit , V \otimes V^{*}) \cong \bigoplus_{k \in I}
  \Hom_{\cat}(\unit , W_{k}).
  \]  
  But as $V$ is absolutely simple this $\Hom$-space is one-dimensional and
  hence there is precisely one $k \in I$ for which $\Hom_{\cat}(\unit ,
  W_{k})$ is non-zero.
  On the other hand, $p_{j} \coev_{V}: \unit \to W_{j}$ is necessarily
  nonzero.  Thus $k=j$.

The statements for $j'$ are argued similarly.
\end{proof}

\begin{lemma}\label{L:indecomp2} Let $W$ be an absolutely indecomposable
  object in $\cat$ and let $R$ be the radical of $\End_{\cat}(W)$.  If
  $\Hom_{\cat}(\unit , W)$ is one-dimensional, then
  \[
  r  f = 0
  \] 
  for all $r \in R$ and $f \in \Hom_{\cat}(\unit , W)$.  If $\Hom_{\cat}(W,
  \unit)$ is one-dimensional, then
  \[
  f  r = 0
  \] 
  for all $r \in R$ and
 $f \in \Hom_{\cat}( W, \unit)$.
\end{lemma}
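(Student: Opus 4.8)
The plan is to prove the first assertion; the second follows by a symmetric argument, replacing $W$ by $W^*$ and reversing all arrows, or simply by applying the first statement in the opposite category. So fix a nonzero $f \in \Hom_{\cat}(\unit, W)$ (if $f=0$ there is nothing to prove), and fix $r \in R$. I want to show $rf = 0$. The key structural input is that $\End_{\cat}(W)$ is a \emph{local} ring with maximal ideal $R$ consisting of nilpotent elements, by the standing hypotheses of Section~\ref{SS:MainTheorem} and the assumption that $W$ is absolutely indecomposable. Moreover, since $\Hom_{\cat}(\unit, W)$ is one-dimensional and contains the nonzero element $f$, it is spanned by $f$.

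First I would observe that $rf$ again lies in $\Hom_{\cat}(\unit, W)$, so there is a scalar $\lambda \in \FK$ with $rf = \lambda f$. The goal is to show $\lambda = 0$. The natural approach is to iterate: applying $r$ repeatedly gives $r^n f = \lambda^n f$ for all $n \geq 1$. Since $r \in R$ and every element of $R$ is nilpotent, there is some $N$ with $r^N = 0$ in $\End_{\cat}(W)$, hence $r^N f = 0$, and therefore $\lambda^N f = 0$. Because $f \neq 0$ and $\FK$ is a field, this forces $\lambda^N = 0$, hence $\lambda = 0$, hence $rf = 0$, as desired. The second statement is handled identically: for $f \in \Hom_{\cat}(W, \unit)$ one-dimensional and $r \in R$, one has $fr \in \Hom_{\cat}(W, \unit) = \FK f$, so $fr = \mu f$ for some $\mu$, and then $f r^n = \mu^n f$, which vanishes once $n$ is large enough that $r^n = 0$, forcing $\mu = 0$.

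The one point that needs a little care — and is really the only obstacle — is the passage from $rf = \lambda f$ to $r^n f = \lambda^n f$: this uses that the composition $\End_{\cat}(W) \otimes \Hom_{\cat}(\unit, W) \to \Hom_{\cat}(\unit, W)$ is $\FK$-bilinear, which holds because $\cat$ is a tensor \kt category (composition of morphisms is \kt bilinear), so $r(rf) = r(\lambda f) = \lambda (rf) = \lambda^2 f$, and induction proceeds. I expect no genuine difficulty here; the argument is essentially the standard fact that a nilpotent operator acts as zero on any one-dimensional invariant subspace over a field, transported into the categorical setting. One should just make sure to note explicitly that $R$ being a nil ideal (every element nilpotent) is exactly what is assumed, rather than merely $R$ being nilpotent as an ideal, though either would suffice for the argument above applied to the single element $r$.
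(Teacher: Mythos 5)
Your proof is correct and follows essentially the same route as the paper's: write $rf = \lambda f$ using one-dimensionality of $\Hom_{\cat}(\unit,W)$, iterate to get $r^N f = \lambda^N f$, and use nilpotency of $r$ plus $\FK$ being a field to conclude $\lambda=0$. The paper handles the second statement by the same symmetric argument you ultimately settle on, and it passes over the $\FK$-bilinearity point you flag, which is indeed immediate from the tensor \kt category axioms.
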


\begin{proof}  
  Since by assumption $\Hom_{\cat}(\unit , W)$ is one-dimensional, we have
  that
  \[
  r  f = \gamma f
  \] 
  for some $\gamma \in \FK $.  By assumption elements of $R$ are nilpotent.
  Hence for $N \gg 0$ we have $r^{N}=0$ and
  \[
  0= r^{N}  f = \gamma^{N}f.
  \]  
  But $\FK$ is a field so this implies $\gamma=0$.

  The second statement is handled in an identical fashion.
\end{proof}

We are now prepared to
state and
 prove the main theorem.  For the reader's convenience, we recall the full set of assumptions in force at this point:   We assume $\kk$ is a field and $\cat$ is an additive pivotal \kt category.  We further assume all indecomposable objects are absolutely indecomposable and that the radical of the endomorphism ring of an absolutely indecomposable object consists of only nilpotent elements.

We also note the following identities which are needed in the proof:
\begin{gather}\label{E:Identities}
 \tcoev_{V^*}=(\phi_V\otimes \Id_{V^*})\coev_V=(\tev_V)^*, \\
 \ev_{V^*}= \tev_{V}(\phi_V^{-1}\otimes \Id_{V^*})=(\coev_V)^*. \notag
\end{gather}
\begin{theorem}\label{T:maintheorem}
  Let $V$ be an absolutely simple object in $\cat$ and let $j$ and $j'$ be as
  in Lemma~\ref{L:indecomp}.  Then the following conditions are equivalent:
  \begin{enumerate}
 \item the object $V$ is right ambi;
 \item $j=j'$;
 \item $W_j^*\cong W_j$.
 \end{enumerate}
\end{theorem}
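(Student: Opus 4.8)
The plan is to prove the cycle of implications $(1)\Rightarrow(2)$, $(2)\Rightarrow(3)$, and $(3)\Rightarrow(1)$, exploiting the characterization of $j$ and $j'$ via the one-dimensional $\Hom$-spaces established in Lemma~\ref{L:indecomp} and the vanishing behavior of the radical given in Lemma~\ref{L:indecomp2}. The object $W_j$ is the unique summand of $V\otimes V^*$ receiving $\unit$, while $W_{j'}$ is the unique summand mapping onto $\unit$; these are pinned down by $\proj_j\coev_V=\coev_V$ and $\tev_V\proj_{j'}=\tev_V$ respectively.

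For $(3)\Rightarrow(2)$ and $(2)\Rightarrow(3)$: if $W_j\cong W_{j'}$ (or in particular if $j=j'$), then since $\Hom_\cat(\unit,W_j)$ is one-dimensional and $\Hom_\cat(W_{j'},\unit)\cong\Hom_\cat(W_j,\unit)$ is one-dimensional, the identities \eqref{E:Identities} should force $j=j'$. More carefully, I would show $j=j'$ is equivalent to $W_j^*\cong W_j$ by noting that dualizing the decomposition \eqref{E:Decomp} gives $V^{**}\otimes V^*\cong(V\otimes V^*)^*=\bigoplus_k W_k^*$, and using the pivotal isomorphism $\phi_V$ to identify $V^{**}\otimes V^*$ with $V\otimes V^*$; the summand $W_{j'}$, being the one that maps onto $\unit$, dualizes to the summand of $\bigoplus W_k^*$ receiving $\unit$, which is $W_j^*$ sitting inside via $(\tev_V\proj_{j'})^* = (\text{something})\,\tcoev$. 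Tracking the indices through this duality shows $W_{j'}^*$ is (up to the fixed isomorphism) the summand $W_j$, hence $W_j^*\cong W_{j'}$, giving $(2)\Leftrightarrow(3)$.

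For the implication $(1)\Rightarrow(2)$, I would argue by contradiction: suppose $V$ is right ambi but $j\neq j'$. The right ambi condition applied to $f=\proj_j$ reads $\proj_j\coev_V=(\phi_V^{-1}\otimes\Id)\proj_j^*\tcoev_{V^*}$, and using \eqref{E:Identities} the right-hand side, suitably massaged, equals $(\tev_V\proj_j)^*$-type data; since $j\neq j'$ we have $\tev_V\proj_j = \tev_V\proj_j(\Id-\proj_{j'})\cdots$ must land in the radical-annihilated part, forcing it to vanish by Lemma~\ref{L:indecomp2}, while $\proj_j\coev_V=\coev_V\neq 0$ — a contradiction. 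Conversely for $(2)\Rightarrow(1)$, assuming $j=j'$ I would verify the right ambi identity directly on each idempotent $\proj_k$: for $k\neq j$ both sides annihilate $\coev_V$ (left side because $\proj_k\coev_V=0$, right side by the dual statement and Lemma~\ref{L:indecomp2}), and for $k=j$ both sides equal $\coev_V$ by the defining properties of $j=j'$ together with \eqref{E:Identities}; since the $\proj_k$ sum to the identity and $\Hom_\cat(V\otimes V^*, V\otimes V^*)$ acts on the relevant morphisms, this suffices. Here one uses that any $f\in\End_\cat(V\otimes V^*)$ can be reduced, via absolute simplicity of $V$, to understanding $f\coev_V$ through the idempotent decomposition.

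The main obstacle I anticipate is the careful bookkeeping in $(1)\Rightarrow(2)$ and $(2)\Rightarrow(1)$: one must correctly transport the right ambi equation through the maps $\phi_V$, $(-)^*$, $\tcoev_{V^*}$ using the identities \eqref{E:Identities}, and recognize when a composite lands in a radical ideal so that Lemma~\ref{L:indecomp2} applies. The key insight making this work is that absolute simplicity of $V$ collapses $\Hom_\cat(\unit, V\otimes V^*)$ and $\Hom_\cat(V\otimes V^*,\unit)$ to one dimension, so the entire right ambi condition — a priori a statement about all of $\End_\cat(V\otimes V^*)$ — is controlled by how a single pair of generators $\coev_V$ and $\tev_V$ interact with the two distinguished idempotents $\proj_j$ and $\proj_{j'}$. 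Establishing that $j\neq j'$ forces the relevant composite into the radical (and hence to zero) is the delicate point, since it requires knowing that $\tev_V$ genuinely kills the summand $W_j$ when $j\neq j'$, which follows from part (2) of Lemma~\ref{L:indecomp} characterizing $\Hom_\cat(W_j,\unit)$.
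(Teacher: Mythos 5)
Your $(1)\Rightarrow(2)$ and $(2)\Leftrightarrow(3)$ arguments track the paper's proof closely and are essentially correct; the paper's $(2)\Leftrightarrow(3)$ likewise dualizes the decomposition of $V\otimes V^*$ and invokes Krull--Schmidt together with Lemma~\ref{L:indecomp}, and its $(1)\Rightarrow(2)$ likewise tests the ambi equation on $\proj_j$.

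The gap is in your $(2)\Rightarrow(1)$. You propose to verify the right-ambi identity only for $f=\proj_k$ and then assert that ``since the $\proj_k$ sum to the identity\dots this suffices.'' It does not: $\End_\cat(V\otimes V^*)$ is in general much larger than the $\kk$-span of the idempotents $\proj_k$, so checking the identity on those idempotents alone establishes nothing about a general endomorphism. Because the ambi condition is linear in $f$, the correct reduction is to take a \emph{general} $f$ and decompose it as $\sum_{r,s}\proj_r f\proj_s$. The pieces with $r\neq j$ or $s\neq j$ kill both sides, but the piece $\proj_j f\proj_j$ is a genuine endomorphism, not a scalar multiple of $\proj_j$, and this is exactly where Lemma~\ref{L:indecomp2} is indispensable. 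Writing $f'=p_jfi_j=\alpha\Id_{W_j}+r$ with $r$ in the radical, Lemma~\ref{L:indecomp2} gives $r\,p_j\coev_V=0$ and $\tev_V i_j\,r=0$, hence $f\coev_V=\alpha\coev_V$ \emph{and} $\tev_V f=\alpha\tev_V$ with the \emph{same} scalar $\alpha$; dualizing the latter and using \eqref{E:Identities} then makes both sides of the ambi equation equal $\alpha\coev_V$. Without this step there is no reason the scalar $\beta$ with $f\coev_V=\beta\coev_V$ and the scalar $\gamma$ with $\tev_V f=\gamma\tev_V$ should agree, and equality of these two scalars is precisely what the ambi condition asserts. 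You cite Lemma~\ref{L:indecomp2} in your preamble and again (misattributed) in the $k\neq j$ case, where in fact only Lemma~\ref{L:indecomp}'s characterization of $j'$ is needed, but you never actually deploy it where it carries the proof.
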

\begin{proof}

The fact that the second and third conditions are equivalent is immediate from the fact that $V\otimes V^{*}$ is isomorphic to its dual, the Krull-Schmidt Theorem, Lemma~\ref{L:indecomp}, and the isomorphisms
\[
\Hom_{\cat}\left( W_{k}, \unit \right) \cong \Hom_{\cat}\left( \unit, W_{k}^{*}\right)
\]
and
\[
\Hom_{\cat}\left(\unit , W_{k} \right) \cong \Hom_{\cat}\left(W_{k}^{*}, \unit\right).
\]

We now show that the first and second conditions are equivalent. 
Assume $j=j'$.  Let $f \in \End_{\cat}\left(V\otimes V^{*} \right)$.  By linearity we may assume without loss that $f = \proj_{r}f\proj_{s}$ for some $r,s \in I$.
If either $r$ or $s$ is not equal to $j$, it then follows that both
$f\coev_{V}=0$ and 
$f^{*}\tcoev_{V^{*}} =0$.  Hence the ambidextrous condition is
trivially satisfied in this case.

Now assume $r=s=j$.  Now we consider $f':=p_{j}fi_{j} \in \End_{\cat}\left(W_{j} \right)$.  Since $W_{j}$ is absolutely indecomposable, we may write $f' = \alpha \Id_{W_{j}} + r$ for some $\alpha \in \kk$ and $r \in \operatorname{Rad}\left( \End_{\cat}\left(W_{j} \right)\right)$.  By Lemma~\ref{L:indecomp2} we then have 
\begin{equation*}
f' p_{j} \coev_{V} = \left(\alpha \Id_{W_{j}} + r \right)p_{j}\coev_{V} = \alpha p_{j}\coev_{V}.
\end{equation*}  Applying $i_{j}$ to both sides and simplifying yields 
\[
f \coev_{V} = \alpha\coev_{V}.
\]  

On the other hand, let us consider $\tev_{V} f$.  Using Lemma~\ref{L:indecomp2} and arguing as above we obtain 
\[
\tev_{V} f = \alpha \tev_{V}.
\]  Dualizing yields 
\[
f^{*}\left(\tev_{V} \right)^{*} = \alpha \left(\tev_{V} \right)^{*}
\]  Using \eqref{E:Identities} and applying $\phi_{V}^{-1}\otimes \Id_{V^{*}}$ to both sides then yields
\[
(\phi_{V}^{-1}\otimes \Id_{V^{*}}) f^{*}\tcoev_{V^{*}} = \alpha (\phi_{V}^{-1}\otimes \Id_{V^{*}}) \tcoev_{V^{*}}= \alpha \coev_{V}.
\]  Combining this with our calculation of $f\coev_{V}$ we have that $V$ is 
right 
ambidextrous.

On the other hand, say $V$ is 
right 
ambidextrous.  Then $\proj_{j}\coev_{V} = (\phi_{V}^{-1} \otimes  \Id_{V^{*}}) \proj_{j}^{*} \tcoev_{V^{*}}$.  By the choice of $j$ we have that $\proj_{j}\coev_{V} \neq 0$ and so 
\[
\proj_{j}^{*} \tcoev_{V^{*}}= (\phi_{V} \otimes  \Id_{V^{*}})\proj_{j}\coev_{V} \neq 0. 
\]
  Dualizing and using \eqref{E:Identities}, this then implies $\tev_{V} \proj_{j} \neq 0$.  However, by Lemma~\ref{L:indecomp} this implies $j=j'$.
\end{proof}

We note that identical arguments also prove the analogous statement for left
ambi objects.  It is also useful to note that the notion of right ambi is
local in the sense that it only depends on $\End_{\cat}(V \otimes V^{*})$.
For example, let $\cat$ be a pivotal $\kk$-category and let $\Fcat$ be a full
pivotal subcategory of $\cat$ (that is, the pivotal structure on $\Fcat$ is inherited from $\cat$).
Let $V$ be an absolutely simple object of $\Fcat$.
Then $V$ is right ambi in $\cat$ if and only if it is right ambi in $\Fcat$.

\subsection{The Ideal $\Proj$}\label{SS:proj}

We now assume $\cat$ is an abelian pivotal \kt category.  By \cite[Proposition 2.1.8]{BK} the tensor product in a pivotal category is exact in both entries and, hence, the full subcategory of $\cat$ consisting of the projective objects forms an ideal.   Let $\Proj$ denote this ideal.   Furthermore, since $\cat$ is a pivotal category it follows that if $P$ is a projective object, then $P^{*}$ is again a projective object (e.g.\ by \cite[Proposition 2.3]{EO}).  That is, projective and injective objects coincide in $\cat$.  Finally, we note that if $V$ is a projective object with $\tev_{V}: V \otimes V^{*} \to \unit$ an epimorphism, then $\ideal_{V}=\Proj$ by \cite[Lemma 12]{GPV}.

Let us now assume that $\cat$ has enough projectives.   Each absolutely simple object then has a projective cover and, since projectives are also injective, this projective cover has a unique absolutely simple subobject.  In particular, if we let $\Pu$ denote the projective cover of $\unit$, then we write $L$ for the unique absolutely simple subobject of $\Pu$. Following \cite{ENO}, we call $\cat$ \emph{unimodular} if 
\[
L \cong \unit.
\] That is, if $\Pu \cong \Pu^{*}$.

We recall the full set of assumptions currently in use:   We assume $\kk$ is a field and $\cat$ is an abelian pivotal \kt category.  We further assume all indecomposable objects are absolutely indecomposable and that the radical of the endomorphism ring of an absolutely indecomposable object consists of only nilpotent elements.  Finally, we assume $\cat$ has enough projectives. 

\begin{corollary}\label{C:PcoverC}  
   Let $\cat$ be a category which satisfies
  the above assumptions.  If $\cat$ contains absolutely simple projective objects, then every absolutely simple projective object is 
  right
  ambidextrous if and only if $\cat$ is unimodular.  
Therefore, if $\cat$ is unimodular and has an absolutely simple projective object, $L$, of $\cat$ such that $\tev_{L}$ is an epimorphism, then $\Proj$ admits a unique non-zero
 right trace.

\end{corollary}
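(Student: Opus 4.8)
The plan is to derive the first assertion from Theorem~\ref{T:maintheorem} by identifying, in terms of the structure theory recalled just above, the distinguished indecomposable summand of $V\otimes V^{*}$ attached by Lemma~\ref{L:indecomp} to an absolutely simple projective object $V$. Fix such a $V$, write $V\otimes V^{*}=\bigoplus_{k\in I}W_{k}$ as in \eqref{E:Decomp}, and let $W_{j}$ be the summand with $\Hom_{\cat}(\unit,W_{j})\neq 0$. Since $\Proj$ is an ideal and $V$ is projective, $V\otimes V^{*}$ is projective, hence so is each $W_{k}$; as projectives and injectives coincide in $\cat$, each $W_{k}$ is injective as well.

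First I would pin down $W_{j}$. Because $\unit$ is simple, any nonzero morphism $\unit\to W_{j}$ is a monomorphism, so $\unit$ embeds in the socle of $W_{j}$; but $W_{j}$ is an indecomposable injective, so its socle is simple, which forces $\operatorname{soc}(W_{j})\cong\unit$ and hence $W_{j}\cong I(\unit)$, the injective hull of $\unit$. (Note this identification does not depend on the chosen $V$.) Next I would show that $\Pu^{*}$ is this same object: $\Pu$ is the projective cover of $\unit$, hence has simple head $\unit$, and the duality functor $(-)^{*}$ is an exact contravariant autoequivalence that interchanges projectives and injectives, exchanges socles and heads, satisfies $\unit^{*}\cong\unit$, and whose square $(-)^{**}$ is naturally isomorphic to the identity via $\phi$ (see \eqref{E:DefphiV}); applying it shows that $\Pu^{*}$ is an indecomposable injective with socle $\unit$, so $\Pu^{*}\cong I(\unit)\cong W_{j}$, and therefore $W_{j}^{*}\cong\Pu^{**}\cong\Pu$.

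Combining these, $W_{j}\cong W_{j}^{*}$ holds if and only if $\Pu\cong\Pu^{*}$, i.e.\ if and only if $\cat$ is unimodular. By Theorem~\ref{T:maintheorem} this says precisely that the fixed $V$ is right ambi if and only if $\cat$ is unimodular; since the criterion is independent of $V$ and an absolutely simple projective object exists by hypothesis, every absolutely simple projective object is right ambi exactly when $\cat$ is unimodular. For the final statement, assume $\cat$ is unimodular and that $L$ is an absolutely simple projective object with $\tev_{L}$ an epimorphism. Then $L$ is right ambi by the equivalence just established, so Theorem~\ref{T:idealtheorem} furnishes a unique nonzero right trace on $\ideal_{L}$ up to scalar; and since $L$ is projective with $\tev_{L}$ an epimorphism, \cite[Lemma~12]{GPV} gives $\ideal_{L}=\Proj$, whence $\Proj$ carries a unique nonzero right trace.

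I expect the only genuinely delicate step to be the structural input in the second paragraph: that an indecomposable injective object has a simple essential socle and is recovered up to isomorphism as the injective hull of that socle, and dually for indecomposable projectives. This is where the standing finiteness hypotheses (Krull--Schmidt, nilpotent radicals, enough projectives) and the recalled structure theory are actually used; the rest is a formal manipulation of the duality functor together with Theorems~\ref{T:maintheorem} and~\ref{T:idealtheorem}.
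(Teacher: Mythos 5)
Your proof is correct and fills in the identification that the paper's terse proof leaves implicit. The paper argues that $L\otimes L^*$ decomposes into indecomposable projectives and then invokes Theorem~\ref{T:maintheorem} together with ``the definition of unimodularity''; what that appeal is suppressing is exactly what you spell out, namely that the distinguished summand $W_j$ with $\Hom(\unit,W_j)\neq 0$ is forced (since indecomposable projectives are injective with unique absolutely simple subobject) to be the injective hull of $\unit$, i.e.\ $W_j\cong\Pu^*$, hence $W_j^*\cong\Pu$, so criterion~(3) of Theorem~\ref{T:maintheorem} becomes $\Pu\cong\Pu^*$, which is unimodularity. The concluding application of Theorem~\ref{T:idealtheorem} and \cite[Lemma~12]{GPV} to get $\ideal_L=\Proj$ matches the paper exactly.
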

\begin{proof}  
  If $L$ is an absolutely simple projective object, then $L\otimes L^{*}$
  decomposes into a direct sum of indecomposable projective objects.  By
  Theorem~\ref{T:maintheorem} and the definition of unimodularity, $L$ is
  ambidextrous if and only if $\cat$ is unimodular.  The second statement
of the corollary 
 is immediate from Theorem~\ref{T:idealtheorem} and the fact that $\ideal_{L}=\Proj$.  
\end{proof}

\section{Examples}\label{S:Examples}

\subsection{} We now apply Corollary~\ref{C:PcoverC} in a variety of settings.  In every case the proof involves verifying the two primary assumptions of Corollary~\ref{C:PcoverC}: the existence of a simple projective object and that the category is unimodular.  The remaining assumptions are well known in each case and left to the reader.

We note that in the following examples the categorical trace on the ideal $\Proj$ is known to be trivial.   Thus the nontrivial right trace given in the examples below cannot be the categorical trace.  We also remark that the categories in Sections~\ref{SS:CharZero}, \ref{SS:groups}, \ref{SS:Liealgebras}, \ref{SS:Liesuperalgebras}, and \ref{SS:Liesupalgebras} are all ribbon categories.  As remarked earlier, this implies the right trace on $\Proj$ is in fact a two-sided trace.

\subsection{Finite-dimensional Hopf algebras}\label{SS:Hopfalgebras}

Fix a ground field $\kk$ and let $H$ be a finite-dimensional Hopf algebra over $\kk$.  Let $\cat$ be $H$-mod, the category of finite-dimensional $H$-modules.  The counit, coproduct, and antipode define a unit object, a tensor product, and a duality on $\cat$.  If we write $S$ for the antipode of $H$, then by \cite[Proposition 2.1]{Bi} the category $\cat$ is a pivotal category if and only if there is a group-like element $g \in H$ such that $S^{2}(x)=gxg^{-1}$ for all $x \in H$.  That is, $\cat$ is a pivotal category if and only if $S^{2}$ is an inner automorphism.

Recall that $H$ is called \emph{unimodular} if the space of left and right integrals coincide (c.f.\ \cite[Section 2.1]{Mont}).   The following result of Lorenz \cite[Lemma 2.5]{Lor} shows that this definition agrees with the categorical notion given earlier. 

\begin{lemma}\label{L:selfduality} If $H$ is a finite-dimensional Hopf algebra over a field $\kk$, then the category of finite-dimensional $H$-modules is unimodular if and only if $H$ is unimodular. 
\end{lemma}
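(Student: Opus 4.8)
The plan is to identify the categorical notion of unimodularity (i.e.\ $\Pu \cong \Pu^{*}$ in $H\text{-mod}$) with the classical ring-theoretic statement that the left and right integrals of $H$ coincide, by working explicitly with the regular representation. First I would recall that $\Pu$, the projective cover of the trivial module $\kk$, is (up to isomorphism) the unique indecomposable projective summand of the regular representation ${}_{H}H$ whose head is $\kk$; equivalently, $\Pu \cong He$ for a primitive idempotent $e$ with $He/He\cdot\operatorname{Rad}(H) \cong \kk$. The key classical input, which I would cite from Lorenz or reprove, is that the socle of ${}_{H}H$ as a left module is the space of \emph{left} integrals $\int^{\ell}_{H}$ with the left $H$-action, and this socle is a one-dimensional module given by a character (the distinguished grouplike / modular function) — and similarly on the right. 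Dually, $\operatorname{soc}(\Pu)$ is one-dimensional, and the statement $L := \operatorname{soc}(\Pu) \cong \unit$ is precisely the statement that this distinguished character is trivial.

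Next I would make the bridge to integrals precise. The dual module $\Pu^{*}$ is again indecomposable projective (as noted in the excerpt, since $\cat$ is pivotal projectives are injectives), and its head is $\unit^{*} \cong \unit$ if and only if its socle is $\unit$; but $\operatorname{head}(\Pu^{*}) \cong (\operatorname{soc}\Pu)^{*} = L^{*}$. So $\Pu \cong \Pu^{*}$ in $\cat$ iff $L \cong \unit$ iff $L^{*}\cong\unit$, consistent with the definition of unimodular given before the statement. On the other side, the classical fact is that $H$ is unimodular (left integrals $=$ right integrals as subspaces of $H$) iff the distinguished grouplike element $\alpha \in H^{*}$ is the counit, iff the one-dimensional module $\operatorname{soc}({}_{H}H)$ is the trivial one. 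Combining: ${}_HH = \bigoplus P_i^{\oplus \dim S_i}$ with $P_i$ the projective cover of the simple $S_i$; the summand with $S_i = \kk$ is $\Pu$, and tracking which simple sits in $\operatorname{soc}(\Pu)$ recovers exactly the modular character $\alpha$. Hence $L\cong\unit \iff \alpha = \varepsilon \iff H$ unimodular.

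The main obstacle — and the step I would spend the most care on — is the identification of $\operatorname{soc}(\Pu)$ with the module of left integrals carrying the modular character, i.e.\ Radford's description of the Nakayama-type automorphism of the Frobenius algebra $H$ in terms of the antipode and the distinguished grouplikes. Concretely one must show that the simple socle of the indecomposable projective $\Pu = He$ is $\kk_\alpha$, the one-dimensional module on which $x \in H$ acts by $\alpha(x)$, where $\alpha$ is defined by $\lambda(xh) = \alpha(h)\lambda(x)$ for a right integral $\lambda \in H^{*}$. This is essentially Lorenz's Lemma 2.5 combined with standard Frobenius-algebra/Radford theory, so in the write-up I would state it as the cited result and then assemble the short formal argument above; the remaining verifications (that $\Pu^{*}$ is the projective cover of $L^{*}$, that duality is exact and reverses head and socle) are routine given the pivotal structure and are handled by the general facts recalled in Section~\ref{SS:proj}.
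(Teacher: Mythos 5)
Your proposal takes the same route as the paper: the paper supplies no proof of its own, recording the statement as Lorenz's \cite[Lemma 2.5]{Lor}, and you correctly identify that citation as carrying the full weight and then unwind how it connects to the categorical definition. The reduction in your second paragraph is correct and worth keeping: $\Pu^{*}$ is indecomposable projective (projective $=$ injective in $\cat$) with head $(\operatorname{soc}\Pu)^{*} = L^{*}$, hence $\Pu^{*}\cong\Pu$ iff $L^{*}\cong\unit$ iff $L\cong\unit$, so the two phrasings of categorical unimodularity agree.

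There is, however, a recurring slip in the first paragraph that you should correct. The claim that ``the socle of ${}_{H}H$ as a left module \emph{is} the space of left integrals'' (and, later, that $\operatorname{soc}({}_{H}H)$ is one-dimensional) is false unless $H$ is local. For a Frobenius algebra, $\operatorname{soc}({}_{H}H)$ is the direct sum of the simple socles of all the indecomposable projective summands of ${}_{H}H$ and generically contains many nonisomorphic simples. The correct statement, and the one you actually need, is that the $\unit$-isotypic component of $\operatorname{soc}({}_{H}H)$ --- the maximal submodule with trivial $H$-action --- is precisely the one-dimensional space of left integrals $\int^{\ell}_{H}$; equivalently, $\int^{\ell}_{H}$ spans the socle of the unique indecomposable projective summand of ${}_{H}H$ that is the injective hull of $\unit$. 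From there the modular character $\alpha$ enters through the right module structure on $\int^{\ell}_{H}$ (or through the Nakayama automorphism of the Frobenius form), and $\alpha=\varepsilon$ encodes $H$ unimodular. With that correction your sketch matches Lorenz's argument, and the lemma follows as you describe.
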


On the other hand, by Oberst and Schneider \cite{OS} (see Lorenz \cite[Proposition 2.5]{Lor}) a Hopf algebra $H$ is unimodular and $S^{2}$ is an inner automorphism if and only if $H$ is a symmetric algebra.

\subsection{Factorizable ribbon Hopf algebras in characteristic zero}\label{SS:CharZero}  

Let $H$ be a finite-dimensional, factorizable, ribbon Hopf algebra over an algebraically closed field of characteristic zero.  Such Hopf algebras appear frequently in the literature.  For example, if $H$ is the Drinfeld double of a finite-dimensional Hopf algebra, then it is well known to always be factorizable.  Furthermore, by Kauffman and Radford \cite{KR} is is also known precisely when the Drinfeld double is a ribbon Hopf algebra.  

\begin{theorem}\label{T:}   Let $H$ be a finite-dimensional, factorizable,
  ribbon Hopf algebra over an algebraically closed field of characteristic
  zero.  Let $\cat$ be the category of finite-dimensional $H$-modules.  Then
  the ideal $\Proj$ in $\cat$ admits a unique nontrivial trace.
\end{theorem}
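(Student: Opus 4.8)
The plan is to verify the two primary hypotheses of Corollary~\ref{C:PcoverC} for the category $\cat$ of finite-dimensional modules over a finite-dimensional, factorizable, ribbon Hopf algebra $H$ over an algebraically closed field $\kk$ of characteristic zero; the remaining structural assumptions (abelian pivotal \kt category with finite-length objects, whence absolutely indecomposable objects and nilpotent radicals by Fitting's Lemma, and enough projectives since $H$ is finite-dimensional) are standard. Since $H$ is ribbon it is in particular pivotal, so $\cat$ is a ribbon, hence pivotal, \kt category; this also means the resulting right trace will automatically be a two-sided trace.

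The first hypothesis to check is that $\cat$ contains an absolutely simple projective object. Here I would invoke factorizability: a finite-dimensional factorizable Hopf algebra over a field of characteristic zero is semisimple if and only if\dots\ no — rather, the key point is that factorizability forces the \emph{trivial} module $\unit$ itself to be projective is false in general; instead one should recall the known fact (due to Etingof--Gelaki, or derivable from the nondegeneracy of the monodromy/Drinfeld map) that a factorizable ribbon Hopf algebra in characteristic zero has a simple projective module. Concretely, factorizability gives a bijection between the Grothendieck ring and the center, and combined with the ribbon (hence unimodular, see below) structure one deduces that the block containing the Steinberg-type module is simple and projective; alternatively, the Higman ideal / the image of the integral argument shows the projective cover of some simple module is simple. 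I would cite the appropriate statement — the cleanest route is that for factorizable $H$ in characteristic zero the Cartan matrix is invertible over $\kk$ is again not quite it; the robust statement is that there exists a simple projective module, which follows from \cite{EO}-type arguments or from the literature on factorizable Hopf algebras (e.g.\ Cohen--Westreich, or Shimizu). Call this simple projective module $L$.

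The second hypothesis is that $\cat$ is unimodular, i.e.\ $\Pu\cong\Pu^{*}$, equivalently (by Lemma~\ref{L:selfduality}) that $H$ is unimodular as an algebra. This is where I would use the ribbon structure decisively: a ribbon Hopf algebra is automatically unimodular — the ribbon element, together with the square of the antipode being implemented by the pivotal group-like $g$, forces the left and right integrals to coincide (this is classical; see Radford, or Kauffman--Radford). So unimodularity is immediate from the ``ribbon'' hypothesis alone and does not even need factorizability. Given unimodularity, $\Pu\cong\Pu^{*}$, so the unique simple subobject $L$ of $\Pu$ is $\cong\unit$, but more usefully for us: since $\cat$ is unimodular and $L$ is an absolutely simple projective object, Corollary~\ref{C:PcoverC} tells us $L$ is right ambi.

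Finally I must produce the epimorphism condition: I need an absolutely simple projective $L$ with $\tev_{L}\colon L\otimes L^{*}\to\unit$ an epimorphism. Since $L$ is simple and $\tev_{L}\neq 0$ (it is part of the duality data and nonzero on a simple object), its image is a nonzero subobject of $\unit$, hence all of $\unit$ because $\unit$ is simple; so $\tev_{L}$ is automatically an epimorphism. Then $\ideal_{L}=\Proj$ and Corollary~\ref{C:PcoverC} yields a unique non-zero right trace on $\Proj$, which by the ribbon structure is two-sided, hence a unique nontrivial trace. For triviality of the categorical trace I would remark (as the paper does in general) that on $\Proj$ the categorical dimension vanishes because the projectives are non-semisimple, so the trace we found is genuinely new. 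I expect the main obstacle to be pinning down the cleanest citation or argument for the existence of a simple projective module in the factorizable characteristic-zero setting; the unimodularity and the epimorphism claim are essentially formal once the ribbon structure is in hand.
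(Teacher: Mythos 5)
Your outline matches the paper's at a high level --- invoke Corollary~\ref{C:PcoverC} after checking (i) unimodularity and (ii) the existence of a simple projective, then upgrade the right trace to a two-sided trace via the ribbon structure --- but there is a genuine error in how you establish unimodularity. You assert that ``a ribbon Hopf algebra is automatically unimodular,'' so that factorizability is not needed for this step. That is false. Sweedler's four-dimensional Hopf algebra $H_{4}$ with its triangular $R$-matrix $R=e_{+}\otimes 1+e_{-}\otimes g$ (where $e_{\pm}=\tfrac12(1\pm g)$) has Drinfeld element $u=g$ and is ribbon with ribbon element $v=1$, yet its left integral $x+gx$ and right integral $x-gx$ differ, so $H_{4}$ is not unimodular. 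The ribbon hypothesis only gives that $S^{2}$ is inner; it does not force the left and right integrals to coincide. (Indeed, by Oberst--Schneider, ribbon $+$ unimodular is equivalent to symmetric, and not every ribbon Hopf algebra is symmetric.) The correct source of unimodularity here is \emph{factorizability}: a finite-dimensional factorizable Hopf algebra is unimodular, which is exactly what the paper cites (\cite[Proposition 4.5]{ENO}, going back to Radford). So factorizability is used twice in the actual proof --- once for unimodularity and once for the existence of a simple projective --- whereas you only credit it with the second role.

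On the existence of a simple projective you circle around several possibilities before landing on Cohen--Westreich \cite{CW}; that is indeed the reference the paper uses, so that part is fine though unnecessarily hedged. Your observation that $\tev_{L}$ is automatically an epimorphism because $L$ is simple, $\tev_{L}\neq 0$, and $\unit$ is simple is correct and is the right way to see that $\ideal_{L}=\Proj$. Once you replace the spurious ``ribbon $\Rightarrow$ unimodular'' step with the factorizability argument, the proof agrees with the paper's.
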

\begin{proof}  Since $H$ is a ribbon Hopf algebra, $\cat$ is a ribbon category.  Since $H$ is assumed to be factorizable it follows that $\cat$ is 
unimodular, see for example
\cite[Proposition 4.5]{ENO}.  By work of Cohen and Westreich \cite{CW} the category $\cat$ always has at least one simple projective object.  
\end{proof}

\subsection{Group algebras and their quantum doubles}\label{SS:groups}
Now let us consider a fixed algebraically closed field $\kk$ of characteristic $p \geq 0$ and a finite group $G$. 
Then the group algebra $\kk G$ is a finite-dimensional Hopf algebra.  We also consider its quantum (or Drinfeld) double $D(G)=D(\kk G)$.  Although the braiding on $\kk G$-mod is symmetric, the braiding on $D(G)$-mod is generally not symmetric and even over the complex numbers one obtains nontrivial topological invariants \cite{TG}.  

Let $\cat$ denote the category of finite-dimensional $\kk G$-modules or $D(G)$-modules, as the case may be. For both $\kk G$ and $D(G)$ the category $\cat$ is an abelian pivotal \kt category. It is a classical fact that $\kk G$-mod is unimodular (e.g. see \cite[Theorem 6]{Alperin}).   On the other hand, combining the work of Radford \cite[Corollary 2 and Theorem 4]{Rad} and Farnsteiner \cite[Proposition 2.3]{Far} it is known that $D(G)$-mod is always unimodular.

If $p=0$ or $p>0$ and coprime to the order of $G$, then both $\kk G$-mod and $D(G)$-mod are semisimple categories.  In this case the only nonzero ideal is $\cat$ itself and the only nontrivial trace is the ordinary categorical trace.  Therefore in what follows we assume $p>0$ and divides the order of $G$. 

In \cite[Section 1]{With1} Witherspoon proved that $\kk G$-mod can be identified as a full subcategory of $D(G)$-mod and that projective objects in $\kk G$-mod are still projective in $D(G)$-mod under this identification.  Combining this with the above discussion we see that whenever $\kk G$-mod has a simple projective module Corollary~\ref{C:PcoverC} implies both $\kk G$-mod and $D(G)$-mod admits a nontrivial trace on the ideal $\Proj$.

To proceed we need a basic fact from the modular representation theory of finite groups.  By, for example, \cite[Corollary 6.3.4]{Ben} the existence of a simple projective $\kk G$-module is equivalent to showing that $\kk G$-mod has a block of defect zero.  The question of which finite groups have a block of defect zero goes back to Brauer and was settled for finite simple groups through the work of various people with the final case of the alternating groups handled by Granville and Ono \cite{GO}.  Summarizing this work we have the following result (see \cite{GO} for further details).

\begin{theorem}\label{T:defectzero}  Let $\kk$ be an algebraically closed field of characteristic $p$.   Assume $G$ is a group appearing in the classification of finite simple groups and:
\begin{enumerate}
\item  If $p=2$, then $G$ is not isomorphic to $M_{12}$, $M_{22}$, $M_{24}$, $J_{2}$, $HS$, $Suz$, $Ru$, $C1$, $C3$, $BM$, or $A_{n}$ where $n \neq 2m^{2}+m$ nor $2m^{2}+m+2$ for any integer $m$;
\item If $p=3$, then $G$ is not isomorphic to $Suz$, $C3$, or $A_{n}$ with $3n+1=m^{2}r$ where r is square-free and divisible by some prime $q \equiv 2$ mod $3$.
\end{enumerate} If $G$ and $p$ are as above, then $\kk G$ has a block of defect zero.
\end{theorem}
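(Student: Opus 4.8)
The plan is to assemble the statement from the modular representation theory of finite groups together with the classification of finite simple groups (CFSG), organized as in Granville and Ono \cite{GO}. I would begin from Brauer's criterion: for a finite group $G$ and a prime $p$ with $p^{a}=|G|_{p}$, the algebra $\kk G$ has a block of defect zero if and only if $G$ has an ordinary irreducible character $\chi$ with $\chi(1)_{p}=p^{a}$; such a $\chi$ reduces modulo $p$ to an absolutely simple projective $\kk G$-module spanning a block with trivial defect group. Implicit in the statement (and harmless, since $\kk G$-mod is semisimple when $p\nmid |G|$) is that $G$ is non-abelian, so by CFSG $G$ is an alternating group $A_{n}$ with $n\geq 5$, a finite group of Lie type, or one of the $26$ sporadic simple groups, and I would treat these three families in turn.

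Groups of Lie type come first and are the easiest. If $p$ is the defining characteristic of $G$, the Steinberg module $\St$ is absolutely simple and projective --- the $\kk G$ incarnation of the simple projective object central to this paper --- so $\kk G$ has a block of defect zero. If $p$ is non-defining, I would invoke the theorem of Willems and Michler (producing a $p$-defect-zero character via Deligne--Lusztig theory) that \emph{every} finite simple group of Lie type has a $p$-block of defect zero for \emph{every} prime $p$. Hence no group of Lie type is excluded.

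Next I would dispose of the $26$ sporadic groups by a finite computation with their known ordinary character tables: for each sporadic $G$, check whether some irreducible degree is divisible by $|G|_{p}$. For $p\geq 5$ all of them pass; for $p=3$ only $Suz$ and $Co_{3}$ fail; and for $p=2$ the failures are precisely $M_{12}$, $M_{22}$, $M_{24}$, $J_{2}$, $HS$, $Suz$, $Ru$, $Co_{1}$, $Co_{3}$, and $BM$ --- exactly the sporadic groups listed in the theorem (with $C1=Co_{1}$ and $C3=Co_{3}$).

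The alternating groups are the heart of the matter and the step I expect to be the main obstacle. One works first in $S_{n}$, where Nakayama's conjecture (a theorem) gives that $S_{n}$ has a $p$-block of defect zero exactly when $n$ has a $p$-core partition, i.e.\ when the $p$-core partition function $c_{p}(n)$ is positive; one then descends to $A_{n}$ through Clifford theory for the index-two subgroup $A_{n}\leq S_{n}$. For $p$ odd the descent is automatic, since $|A_{n}|_{p}=|n!|_{p}$, so $A_{n}$ has a $p$-block of defect zero iff $S_{n}$ does; for $p\geq 5$ the positivity of $c_{p}(n)$ for all $n$ is the delicate analytic input of \cite{GO}, while for $p=3$ one has $c_{3}(n)>0$ iff an explicit condition on the prime factorization of $3n+1$ holds --- precisely ``$3n+1=m^{2}r$ with $r$ square-free and divisible by no prime $q\equiv 2\pmod 3$''. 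For $p=2$ the descent is genuinely subtle, because $|A_{n}|_{2}=|n!|_{2}/2$: the $2$-cores are the staircases $(k,k-1,\dots,1)$ of triangular size $T_{k}=\binom{k+1}{2}$, which are self-conjugate, so Clifford theory yields a defect-zero $2$-block of $A_{n}$ whenever $n$ is triangular, while a separate analysis of the weight-one partitions of $n$ (whose $2$-blocks of $S_{n}$ have defect group of order $2$, and which restrict irreducibly to $A_{n}$ when not self-conjugate) supplies the extra family with $n-2$ triangular; together these give exactly $n=2m^{2}+m$ or $n=2m^{2}+m+2$. Collecting the three families, and checking that the excluded groups are precisely those carrying one of these obstructions, completes the argument. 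The only substantial work --- the analytic estimates on $c_{p}(n)$ and the $S_{n}/A_{n}$ combinatorics --- I would quote from \cite{GO} and the standard references on the modular representation theory of the symmetric and alternating groups.
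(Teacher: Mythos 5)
The paper does \emph{not} prove Theorem~\ref{T:defectzero}; the authors state explicitly that the result ``was settled for finite simple groups through the work of various people with the final case of the alternating groups handled by Granville and Ono \cite{GO}. Summarizing this work we have the following result (see \cite{GO} for further details).'' So there is no in-paper proof to compare against --- the theorem is a literature summary, and your job was to reconstruct what is being cited.

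Read in that light, your sketch is an essentially accurate account of what goes into \cite{GO} and the results it relies on: Brauer's criterion (a $p$-block of defect zero exists iff some ordinary irreducible character has $p$-part of its degree equal to $|G|_{p}$), reduction via CFSG to the alternating, Lie type, and sporadic families, the Steinberg module in defining characteristic together with the Michler/Willems theorem in non-defining characteristic for the Lie type case, a finite inspection of character tables for the sporadics, and for $A_{n}$ the combination of Nakayama's conjecture (defect-zero blocks of $S_{n}$ correspond to $p$-core partitions of $n$) with Clifford theory for $A_{n}\leq S_{n}$. You correctly identify the two places where the combinatorics has real content: the analytic positivity of $c_{p}(n)$ for $p\geq 5$, and the descent at $p=2$, where $|A_{n}|_{2}=|n!|_{2}/2$ forces one to look both at the self-conjugate $2$-cores (staircases, giving $n$ triangular) and at non-self-conjugate weight-one partitions (giving $n-2$ triangular), recovering exactly the condition $n=2m^{2}+m$ or $n=2m^{2}+m+2$. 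The $p=3$ criterion you quote on the square-free part of $3n+1$ also matches the theorem's exclusion.

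One small correction to your aside on the abelian case. You write that excluding abelian simple groups is ``harmless, since $\kk G$-mod is semisimple when $p\nmid |G|$,'' but that addresses only the uninteresting subcase. The real issue is $G=C_{p}$ with $p=\ell$: then $\kk C_{p}\cong \kk[x]/(x^{p})$ is a single block of positive defect, so the theorem would be \emph{false} as literally stated if cyclic groups of prime order were allowed. Thus restricting to non-abelian simple groups is a necessary hypothesis the paper leaves implicit, not a harmless convention. Beyond that, your argument is a faithful reconstruction of the cited proof, and in fact supplies considerably more mathematical content than the paper itself does at this point.
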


We remark that \cite{GO} also shows that the symmetric group on $n$ letters has a block of defect zero for all $p \geq 5$.  Applying this to our setting we obtain the following.

\begin{theorem}\label{T:Proj}  Let $\kk$ be an algebraically closed field of characteristic $p$ and let $G$ be a finite simple group listed in the previous theorem.  Or let $G$ be a symmetric group on $n$ letters and assume $p \geq 5$.  Then $\Proj$ admits a unique nontrivial trace in both $\kk G$-mod and $D(G)$-mod.
\end{theorem}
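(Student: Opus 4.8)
The plan is to deduce both assertions from Corollary~\ref{C:PcoverC}; essentially all of the infrastructure required there was already assembled in Section~\ref{SS:groups}, so the only genuinely new ingredient is the existence of an absolutely simple projective object. That is precisely what Theorem~\ref{T:defectzero} (and, for the symmetric groups, the result of \cite{GO} recorded just after it) supplies: by \cite[Corollary~6.3.4]{Ben} a block of defect zero for $\kk G$ is the same as a simple projective $\kk G$-module, and since $\kk$ is algebraically closed such a module is automatically absolutely simple.

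First I would treat $\cat = \kk G$-mod. By Theorem~\ref{T:defectzero}, or by \cite{GO} when $G = S_{n}$ and $p \geq 5$, the algebra $\kk G$ has a block of defect zero, hence an absolutely simple projective module $L$. The remaining hypotheses of Corollary~\ref{C:PcoverC} hold for standard reasons: $\cat$ is an abelian pivotal \kt category with enough projectives, being the module category of a finite-dimensional algebra; it is unimodular by the classical fact cited in Section~\ref{SS:groups}; and, since $\kk$ is algebraically closed and all objects have finite length, Fitting's Lemma forces every indecomposable object to be absolutely indecomposable with nilpotent endomorphism radical. It remains to see that $\tev_{L}\colon L \otimes L^{*} \to \unit$ is an epimorphism; but $\tev_{L}$ is non-zero because $(\tev_{L} \otimes \Id_{L})(\Id_{L} \otimes \tcoev_{L}) = \Id_{L}$, and its image, a non-zero subobject of the simple object $\unit$, must be all of $\unit$. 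Corollary~\ref{C:PcoverC} then yields the unique non-zero trace on $\Proj$ in $\kk G$-mod, and since $\cat$ is ribbon this trace is two-sided.

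For $\cat = D(G)$-mod the argument is parallel. This category is again abelian pivotal over $\kk$ with enough projectives, it is unimodular by the combination of Radford's and Farnsteiner's results cited in Section~\ref{SS:groups}, and the Fitting's Lemma argument again supplies absolute indecomposability and nilpotence of radicals. To obtain an absolutely simple projective object I would invoke Witherspoon's theorem \cite{With1}, which realizes $\kk G$-mod as a full pivotal subcategory of $D(G)$-mod in such a way that projectives remain projective: the module $L$ above becomes a projective $D(G)$-module, and it stays absolutely simple because fullness of the embedding preserves the endomorphism ring $\kk$. As before $\tev_{L}$ is an epimorphism in $D(G)$-mod since the trivial $D(G)$-module $\unit$ is simple, so Corollary~\ref{C:PcoverC} produces the unique non-zero trace on $\Proj$ in $D(G)$-mod.

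I do not expect a real obstacle here: the substance lies entirely in the deep external inputs already cited — the classification-based Theorem~\ref{T:defectzero}, Witherspoon's comparison of $\kk G$-mod and $D(G)$-mod, and the unimodularity of $D(G)$. The only steps needing an explicit (but short) argument are that $\tev_{L}$ is an epimorphism in each category and that $L$ transports to an absolutely simple projective $D(G)$-module under Witherspoon's identification; both are immediate from the simplicity of the unit object and the fullness of the embedding.
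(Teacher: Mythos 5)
Your proposal is correct and follows exactly the route the paper intends: verify the two substantive hypotheses of Corollary~\ref{C:PcoverC} (unimodularity from Alperin for $\kk G$ and Radford--Farnsteiner for $D(G)$, existence of an absolutely simple projective from a defect-zero block via Theorem~\ref{T:defectzero} and Witherspoon's full embedding), note that $\tev_{L}$ is epi since $\unit$ is simple, and apply the corollary. The paper leaves these verifications implicit, so your only addition is to spell out the (short) epimorphism check and the transport of $L$ across Witherspoon's embedding, both of which you handle correctly.
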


\subsection{Irrational conformal field theories}\label{SS:CFT} 
The investigation of conformal field theories leads to the study of pivotal
categories.  It was shown by Huang that the representation category of a
rational conformal vertex algebra is, in our language, a semisimple ribbon
category with finitely many simple objects (see \cite[Proposition 2]{Fuchs}
and references therein).  However, many problems in mathematics and physics
naturally lead to the study of irrational conformal field theories which need
not be semisimple.  As an example, consider the logarithmic tensor category
theory developed by Huang, Lepowsky, and Zhang (see \cite{HLZ} and its seven
sequels).  See also \cite{Fuchs} for a discussion of the non-semisimple case.
The following theorem suggests ambidextrous objects could be helpful in the
study of irrational conformal field theories.

Let $\cat$ denote the category $W(p)$-mod defined in \cite[Section 6]{Fuchs} and associated to the so-called $(1,p)$ minimal model.  Using the description of $\cat$ given in \cite{Fuchs,NT}, we see that $\cat$ is a pivotal category which contains a simple projective object and is unimodular.  Consequently we have the following theorem.

\begin{theorem}\label{T:CFT}  Let $\cat = W(p)$-mod be the category associated
  to the $(1,p)$ minimal model in \cite{Fuchs}.  Then the ideal $\Proj$ in
  $\cat$ admits a unique nontrivial right trace.
\end{theorem}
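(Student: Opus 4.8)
The plan is to derive this from Corollary~\ref{C:PcoverC}. Concretely, for $\cat = W(p)$-mod I must check four things: that $\cat$ satisfies the standing assumptions in force at that point (an abelian pivotal \kt category over a field $\kk$, with indecomposable objects absolutely indecomposable, the radical of the endomorphism ring of an absolutely indecomposable object nilpotent, and enough projectives); that $\cat$ is unimodular; that $\cat$ contains an absolutely simple projective object $L$; and that $\tev_L$ is an epimorphism. Granting these, the second assertion of Corollary~\ref{C:PcoverC} produces a unique non-zero right trace on $\Proj$, and since the categorical trace is identically zero on $\Proj$ (as noted at the start of Section~\ref{S:Examples}) this right trace is necessarily nontrivial.

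For the standing assumptions I would recall from \cite{Fuchs,NT} that $W(p)$-mod is equivalent to the category of finite-dimensional modules over a finite-dimensional associative $\C$-algebra; in particular it is a $\C$-linear abelian category all of whose objects have finite length, and it has enough projectives. Since $\C$ is algebraically closed, Fitting's Lemma then gives both that indecomposables are absolutely indecomposable and that the relevant radicals are nilpotent, exactly as in the abelian example discussed in Section~\ref{SS:MainTheorem}. The pivotal structure is part of the (quasi-)ribbon structure on $W(p)$-mod recorded in \cite{Fuchs,NT}.

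The representation-theoretic input is the existence of an absolutely simple projective object and unimodularity. For the former I would invoke the standard fact about the $(1,p)$ triplet model that, among its simple modules, the ones of extremal conformal weight are projective; fix one of them and call it $L$, so $L$ is absolutely simple and projective. For unimodularity, the statement $\Pu \cong \Pu^{*}$ for the projective cover of the vacuum is part of the structure theory described in \cite{Fuchs,NT} (equivalently, the associated finite-dimensional algebra is symmetric, or $\cat$ is unimodular in the sense of \cite{ENO}). Finally, $\tev_L$ is automatically an epimorphism: the vacuum module $\unit$ is simple, so any nonzero morphism into it is epi, while $\tev_L \neq 0$ because $(\tev_L\otimes \Id_L)(\Id_L\otimes \tcoev_L)=\Id_L$. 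With $\tev_L$ epi and $L$ projective, \cite[Lemma~12]{GPV} gives $\ideal_L = \Proj$, so Corollary~\ref{C:PcoverC} applies and the proof is complete.

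The only genuine obstacle is bookkeeping with the references: one must correctly extract from the (fairly intricate) description of the $(1,p)$ triplet model in \cite{Fuchs,NT} both that a simple projective module exists and that the category is unimodular. Once those two facts are secured, the remainder of the argument is purely formal and requires no further computation.
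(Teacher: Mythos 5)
Your proposal takes essentially the same route as the paper: the paper likewise deduces the theorem from Corollary~\ref{C:PcoverC} after reading off from \cite{Fuchs,NT} that $W(p)$-mod is a pivotal category satisfying the standing assumptions, is unimodular, and contains a simple projective object. The one point you make explicit that the paper leaves tacit is that $\tev_L$ is automatically an epimorphism (because $\tev_L\neq 0$ by the duality axioms and $\unit$ is simple), which is a correct and useful bit of bookkeeping but not a different argument.
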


\subsection{Lie algebras in positive characteristic}\label{SS:Liealgebras}  Let $\kk$ be an algebraically closed field of characteristic $p>2$ and let $\fg$ be a restricted Lie algebra; that is, a Lie algebra defined over the field $\kk$ with an extra map $x \mapsto x^{[p]}$ giving $\fg$ a restricted structure.  For example, if $G$ is a reductive algebraic group defined over $\kk$, then $\fg = \operatorname{Lie}(G)$ is a restricted Lie algebra.   Note that for every $x \in \fg$ the element $x^{[p]}-x^{p}$ is necessarily central in the enveloping algebra $U(\fg )$.  Let $u(\fg )$ denote the restricted enveloping algebra 
of $\g$ 
defined by the quotient of 
 $U(\fg )$ by the ideal generated by all elements $x^{[p]}-x^{p}$ for $x \in \fg$.   Then $u(\fg )$ is a finite-dimensional Hopf algebra which inherits a Hopf algebra structure from $U(\fg )$.  Let $\Fcat_{0}$ denote the category of finite-dimensional $u(\fg)$-modules.  

Let $\mathcal{Z}$ denote the subalgebra of $U(\fg )$ generated by $x^{[p]}-x^{p}$ for $x \in \fg$.  Let $\Fcat$ be the category of all finite-dimensional $U(\fg )$-modules on which the elements of $\mathcal{Z}$ act semisimply.   Then $\Fcat$ decomposes into blocks according to the algebra homomorphisms $\mathcal{Z} \to \kk$.  In particular, $\Fcat_{0}$ is precisely the principal block of $\Fcat$ in this decomposition; that is, the full subcategory of all modules annihilated by $x^{[p]}-x^{p}$ for all $x \in \fg$. 

 The main result we need is due to Larson and Sweedler \cite[Corollary to Proposition 8]{LS} (see also \cite{Hum} for an alternate proof).
\begin{theorem}\label{T:Sweedler} If $\fg$ is a restricted Lie algebra, then $u(\fg )$ is unimodular if and only if the trace of $\operatorname{ad}(x)$ is zero for all $x \in \fg$, where $\operatorname{ad}$ denotes the adjoint representation.  In particular, if $\fg = \operatorname{Lie}(G)$ for some reductive algebraic group $G$, then $u(\fg )$ is unimodular.
\end{theorem}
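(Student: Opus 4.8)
The plan is to reduce the statement to an explicit computation of the \emph{modular function} of the finite-dimensional Hopf algebra $u(\fg)$, and then to identify that function with $\operatorname{tr}\circ\operatorname{ad}$. Recall the general mechanism: for any finite-dimensional Hopf algebra $H$ the space of left integrals $\{\Lambda\in H : h\Lambda=\varepsilon(h)\Lambda\ \text{for all}\ h\in H\}$ is one-dimensional, and for a fixed nonzero such $\Lambda$ there is a unique algebra homomorphism $\alpha\colon H\to\kk$ with $\Lambda h=\alpha(h)\Lambda$ for all $h\in H$; by the definition of unimodularity, $H$ is unimodular precisely when $\alpha=\varepsilon$.

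Specializing to $H=u(\fg)$, an algebra homomorphism $u(\fg)\to\kk$ is the same datum as a linear functional $\chi\colon\fg\to\kk$ vanishing on $[\fg,\fg]$ and satisfying $\chi(x^{[p]})=\chi(x)^{p}$, and $\alpha=\varepsilon$ if and only if $\chi:=\alpha|_{\fg}$ is identically zero. So the theorem becomes the assertion that $\chi=\operatorname{tr}\circ\operatorname{ad}$. First I would check that $\operatorname{tr}\circ\operatorname{ad}$ is indeed a functional of the required type: it kills $[\fg,\fg]$ since the trace of a commutator vanishes, and $\operatorname{tr}(\operatorname{ad}(x^{[p]}))=\operatorname{tr}((\operatorname{ad}x)^{p})=\operatorname{tr}(\operatorname{ad}x)^{p}$ because $\operatorname{ad}$ is a restricted homomorphism and $\operatorname{tr}(A^{p})=\operatorname{tr}(A)^{p}$ in characteristic $p$.

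Then I would compute $\chi$ directly. Fix an ordered basis $x_{1},\dots,x_{n}$ of $\fg$ and the associated PBW basis $\{x_{1}^{a_{1}}\cdots x_{n}^{a_{n}} : 0\le a_{i}\le p-1\}$ of $u(\fg)$, with its degree filtration. One verifies that the one-dimensional space of left integrals is spanned by an element $\Lambda$ whose top-degree term is $x_{1}^{p-1}\cdots x_{n}^{p-1}$. Since $\Lambda x_{k}$ again lies in $\kk\Lambda$, the scalar $\chi(x_{k})$ is exactly the coefficient of $x_{1}^{p-1}\cdots x_{n}^{p-1}$ in the PBW expansion of $\Lambda x_{k}$. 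Reordering $\Lambda x_{k}$ using $x_{i}x_{i}^{p-1}=x_{i}^{[p]}$ and the commutation relations, the top monomial is produced precisely by the commutator corrections picked up as $x_{k}$ is moved past the factors $x_{i}^{p-1}$, and these assemble into $\sum_{i}(\text{$x_{i}$-coefficient of }[x_{i},x_{k}])=\operatorname{tr}(\operatorname{ad}x_{k})$, up to one overall sign which is irrelevant for deciding whether $\chi$ vanishes. (Alternatively, one can identify the one-dimensional $u(\fg)$-module on which $u(\fg)$ acts through $\alpha$ with the top exterior power $\Lambda^{n}(\fg)$ of the adjoint representation, on which $\fg$ visibly acts by $\operatorname{tr}\circ\operatorname{ad}$, using the Frobenius structure of $u(\fg)$.) Either route gives $\chi=\pm\operatorname{tr}\circ\operatorname{ad}$, hence $u(\fg)$ is unimodular if and only if $\operatorname{tr}(\operatorname{ad}x)=0$ for all $x\in\fg$. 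This paragraph is where the real content lies, and I expect it to be the main obstacle: the PBW bookkeeping is delicate because in $u(\fg)$ multiplication by a basis element can \emph{lower} the degree filtration via $x_{i}^{p}=x_{i}^{[p]}$, so one must isolate exactly those reordering terms that feed back into the top monomial; the exterior-power approach trades this for the equally nontrivial task of pinning down the correct duality isomorphism.

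Finally, the reductive case follows immediately: if $\fg=\operatorname{Lie}(G)$ with $G$ reductive, then $\det\circ\operatorname{Ad}\colon G\to\kk^{\times}$ is a character that is trivial on the semisimple group $[G,G]$ (which has no nontrivial characters) and on the central torus $Z(G)^{\circ}$ (where $\operatorname{Ad}$ is trivial), hence trivial on $G=[G,G]\cdot Z(G)^{\circ}$; differentiating at the identity gives $\operatorname{tr}(\operatorname{ad}x)=0$ for all $x\in\fg$, so $u(\fg)$ is unimodular by the first part. Everything outside the third paragraph is either standard Hopf-algebra theory or, in the reductive case, an elementary observation about characters of $G$.
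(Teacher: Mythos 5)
The paper does not give a proof of this theorem at all: it simply cites Larson--Sweedler \cite[Corollary to Proposition 8]{LS} (with \cite{Hum} noted as an alternate proof) and moves on. So there is no ``paper's own proof'' to compare against, only the reference. Your sketch is essentially the Larson--Sweedler route: reduce unimodularity to triviality of the modular function $\alpha\colon u(\fg)\to\kk$ (standard Hopf-algebra theory, correctly stated), observe that $\alpha$ is determined by a restricted $p$-character $\chi=\alpha|_{\fg}$, and identify $\chi$ with $\operatorname{tr}\circ\operatorname{ad}$. Your checks that $\operatorname{tr}\circ\operatorname{ad}$ kills $[\fg,\fg]$ and satisfies $\operatorname{tr}(\operatorname{ad}(x^{[p]}))=\operatorname{tr}(\operatorname{ad}x)^{p}$ are correct (the latter uses the classical congruence $\operatorname{tr}(A^{p})\equiv\operatorname{tr}(A)^{p}$ in characteristic $p$), and the reductive case argument via $\det\circ\operatorname{Ad}$ being trivial on $[G,G]\cdot Z(G)^{\circ}$ and then differentiating is clean and correct.

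The one place you should be honest about the gap, as you already are, is the central identification $\chi=\pm\operatorname{tr}\circ\operatorname{ad}$. As written, this is a sketch, not a proof: you need to (i) verify that a left integral $\Lambda$ really does have $x_{1}^{p-1}\cdots x_{n}^{p-1}$ as its top filtration term (this follows from nondegeneracy of the Frobenius form on $u(\fg)$, but it is not immediate), and (ii) carry out the reordering carefully, including checking that the strictly lower filtration pieces of $\Lambda$ cannot feed into the top PBW monomial of $\Lambda x_{k}$ (they cannot for $p>2$ because right multiplication by $x_{k}$ raises filtration degree by at most one, while the gap between the top monomial and the next possible contribution is larger, but this deserves a sentence). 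Also note that your intermediate formula $\sum_{i}(\text{$x_{i}$-coefficient of }[x_{i},x_{k}])$ is off by a sign from $\operatorname{tr}(\operatorname{ad}x_{k})$ and moreover acquires a factor of $(p-1)\equiv -1$ from the $p-1$ commutations past $x_{i}^{p-1}$; these signs in fact cancel, so the end result is $\chi=\operatorname{tr}\circ\operatorname{ad}$ on the nose, but your hedge ``up to an irrelevant sign'' papers over a step that should be tracked if one wanted a complete proof rather than a citation. If you only want to justify the theorem at the level the paper does, it is cleaner to simply invoke \cite{LS} or \cite{Hum}; if you want a self-contained argument, the exterior-power identification $\kk_{\alpha}\cong\Lambda^{\dim\fg}(\fg)$ you mention in passing is the more conceptual route, but it too requires a genuine argument (via the Frobenius/Nakayama structure of $u(\fg)$) rather than an assertion.
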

We then have the following theorem.

\begin{theorem}\label{T:LieAlgebras}  Let $G$ be a reductive algebraic group defined over an algebraically closed field $\kk$ of characteristic $p>2$ and let $\fg = \operatorname{Lie}(G)$.  Then the ideal $\Proj$ in the categories $\Fcat_{0}$ and $\Fcat$ admits a 
unique 
nontrivial trace.
\end{theorem}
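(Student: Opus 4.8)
The plan is to derive both statements from Corollary~\ref{C:PcoverC}. Following the convention of \S\ref{S:Examples}, the points that genuinely need checking are that $\Fcat_{0}$ and $\Fcat$ are unimodular and that each contains an absolutely simple projective object; the other standing hypotheses are routine. For these I would note that $\Fcat_{0}=u(\fg)\text{-mod}$ is the module category of a finite-dimensional Hopf algebra whose antipode satisfies $S^{2}=\operatorname{id}$ (its defining ideal in $U(\fg)$ is generated by the primitive elements $x^{[p]}-x^{p}$), so $\Fcat_{0}$ is an abelian pivotal \kt category by \cite[Proposition 2.1]{Bi} with group-like element $1$, and the conditions on indecomposables and nilpotence of radicals follow from Fitting's Lemma since $\kk$ is algebraically closed; while $\Fcat$ decomposes into blocks indexed by the maps $\mathcal{Z}\to\kk$, each block being the module category of a reduced enveloping algebra of $\fg$, with tensor products and duals staying in $\Fcat$ because the $x^{[p]}-x^{p}$ are primitive in $U(\fg)$. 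The structural remark I would isolate at the outset is that the principal block is exactly $\Fcat_{0}$ and that it occurs as a direct summand of $\Fcat$, so that an object of $\Fcat_{0}$ is projective in $\Fcat$ if and only if it is projective in $\Fcat_{0}$.

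For unimodularity: since $\fg=\operatorname{Lie}(G)$ with $G$ reductive we have $\tr(\operatorname{ad}(x))=0$ for all $x\in\fg$, so $u(\fg)$ is unimodular by Theorem~\ref{T:Sweedler}, hence $\Fcat_{0}$ is unimodular by Lemma~\ref{L:selfduality}; equivalently, the unique absolutely simple subobject $L$ of $\Pu$ is $\unit$. For $\Fcat$, the unit $\unit=\kk$ lies in the principal block, hence so do its projective cover $\Pu$ and the subobject $L$, so $L\cong\unit$ holds in $\Fcat$ for the same reason it holds in $\Fcat_{0}$; thus $\Fcat$ is unimodular as well.

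For the simple projective object I would invoke the classical fact that $u(\fg)$ has an absolutely simple projective (equivalently injective) module --- the restricted Steinberg module when $G$ is semisimple, the general reductive case reducing to it (see e.g.\ \cite{Hum}); call it $L$. As an object of $\Fcat$ it lies in $\Fcat_{0}$, a direct summand of $\Fcat$, so it is projective in $\Fcat$ as well; and $\tev_{L}\colon L\otimes L^{*}\to\unit$ is nonzero with $\unit$ simple, hence an epimorphism. Corollary~\ref{C:PcoverC} then applies to each of $\Fcat_{0}$ and $\Fcat$ and yields a unique non-zero right trace on $\Proj$. Since the braiding on $\fg$-modules is the flip, both categories are ribbon, so this right trace is in fact a two-sided trace, and by the remark opening \S\ref{S:Examples} it is nontrivial and is not the categorical trace, which vanishes on $\Proj$.

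The step I expect to be the main obstacle is the treatment of $\Fcat$, which is not the module category of a single finite-dimensional Hopf algebra, so the Hopf-algebraic inputs above (Bichon, Lorenz, Larson--Sweedler) do not apply to it directly. The resolution is precisely the structural remark: work with the block decomposition $\Fcat=\bigoplus_{\chi}U_{\chi}(\fg)\text{-mod}$ and observe that both unimodularity and the existence of a simple projective object are controlled entirely by the principal block $\Fcat_{0}=u(\fg)\text{-mod}$, where those inputs are available.
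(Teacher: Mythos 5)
Your proposal is correct and follows essentially the same route as the paper: take the restricted Steinberg module as the absolutely simple projective, establish unimodularity of $u(\fg)$ via the Larson--Sweedler criterion (Theorem~\ref{T:Sweedler}), and then invoke Corollary~\ref{C:PcoverC}. The one place where you diverge from the paper is in handling $\Fcat$: the paper applies Corollary~\ref{C:PcoverC} only to $\Fcat_{0}$ to conclude $\St$ is ambidextrous there, and then implicitly uses the locality of ambidexterity (the remark after Theorem~\ref{T:maintheorem}: since $\Fcat_{0}$ is a full pivotal subcategory of $\Fcat$, an object is ambi in one iff it is ambi in the other) together with $\ideal_{\St}=\Proj$. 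You instead observe directly that $\Fcat$ is unimodular --- because $\unit$, its projective cover, and the socle of that cover all live in the direct summand $\Fcat_{0}$ --- and apply the corollary to $\Fcat$ itself. Both work; yours has the small advantage of not needing the locality remark, at the cost of having to check the standing hypotheses of \S\ref{SS:MainTheorem} for the bigger category $\Fcat$, which you do. You are also more careful than the printed proof in spelling out that $\tev_{\St}$ is epi (required for the second sentence of Corollary~\ref{C:PcoverC} to give $\ideal_{\St}=\Proj$) and that the categories are ribbon so the right trace is two-sided. The only nit is the citation for the Steinberg module in the reductive case: the paper uses \cite[Proposition II.10.2]{Ja}, which is the cleaner reference, rather than \cite{Hum}.
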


\begin{proof} Let $\St$ be the Steinberg representation for $\fg$.  Then $\St$ is a simple projective module for $u(\fg )$ \cite[Proposition II.10.2]{Ja}.  By Theorem~\ref{T:Sweedler} $\Fcat_{0}$ is unimodular.  Therefore by Corollary~\ref{C:PcoverC} $\St$ is ambidextrous and defines a nontrivial trace on $\Proj$ in $\Fcat_{0}$ and $\Fcat$.  This proves the desired result.
\end{proof}

We also note that if $\fg$-mod denotes the category of all finite-dimensional $\fg$-modules, then it is seen using results of \cite{Bend} that $\fg$-mod has no projective objects.  However, as $\Fcat_{0}$ is a full subcategory of $\fg$-mod, the Steinberg module still provides an ambi object in $\fg$-mod and so defines a trace on the ideal it generates.

The above theorem generalizes the explicit calculations done for $\fg = \mathfrak{sl}_{2}(\kk)$ in \cite{GKP}.  It is worth emphasizing that those calculations also show that there are simple modules for $\mathfrak{sl}_{2}(\kk)$  which are \emph{not} ambidextrous.

\subsection{Quantum groups at a root of unity}\label{SS:quantumgroups} In this subsection let $\fg$ be a semisimple complex Lie algebra.  Fix an odd integer $l>1$ which is coprime to three if $\fg$ contains a component of type $G_{2}$.  Fix $\zeta \in \C$ a primitive $l$th  root of unity.  Let $U_{\zeta}(\fg)$ (resp.\  $\mathcal{U}_{\zeta}(\fg )$) denote the restricted quantum group associated to $\fg$ obtained by specializing the Lusztig form (resp.\  the non-restricted quantum group associated to $\fg$ obtained by specializing the De Concini-Kac form) to the root of unity $\zeta$. Let $u_{\zeta}(\fg )$ denote the finite-dimensional Hopf algebra commonly known as the \emph{small quantum group}.  All three algebras are defined, for example, in \cite{CP}.

Let $\Fcat$ denote the category of all Type \textbf{1} finite-dimensional $U_{\zeta}(\fg )$-modules.  By Type \textbf{1} we mean that each central element $K_{i}^{l}$ acts by $1$.   There is no loss in assuming our modules are of Type \textbf{1} as the general case can easily be deduced from this one.  As we only consider representations in $\Fcat$, we replace $U_{\zeta}(\fg )$ with its quotient by the ideal generated by $K_{i}^{l}-1$.  Having done so, $u_{\zeta}(\fg )$ appears as the Hopf subalgebra of $U_{\zeta}(\fg )$ generated by $E_{i}, F_{i}, K^{\pm 1}_{i}$.  Thus we have a restriction functor from $\Fcat$ to $u_{\zeta}(\fg )$-mod, the category of finite-dimensional $u_{\zeta}(\fg )$-modules.  

On the other hand, $\mathcal{U}_{\zeta}(\fg )$ contains a large central Hopf subalgebra $\mathcal{Z}$ generated by $E_{i}^{l}$, $F_{i}^{l}$, and $K_{i}^{l}$.  Let $\Dcat$ denote the category of all finite-dimensional $\mathcal{U}_{\zeta}(\fg )$-modules on which the elements $K^{\pm 1}_{i}$ acts semisimply and the subalgebra $\mathcal{Z}$ also acts semisimply.  Then since $\mathcal{Z}$ acts semisimply, the category $\Dcat = \oplus_{\chi} \Dcat_{\chi}$, where the direct sum runs over all algebra homomorphisms $\chi: \mathcal{Z}\to \C$, and where $\Dcat_{\chi}$ is by definition the full subcategory of $\Dcat$ of all modules annihilated by $x-\chi(x)$ for all $x \in \mathcal{Z}$.

In particular, define $\chi_{0}$ by $\chi_{0}(E_{i}^{l})=\chi_{0}(F_{i}^{l})=0$ and $\chi_{0}(K_{i}^{l})=1$.  Then setting $K$ equal to the ideal of $\mathcal{U}_{\zeta}(\fg )$ generated by the kernel of $\chi_{0}$ we have $\mathcal{U}_{\zeta}(\fg )/K \cong u_{\zeta}(\fg)$, where the isomorphism is as Hopf algebras.  Hence there is an inflation functor from $u_{\zeta}(\fg )$-mod to $\Dcat$ which allows us to identify $u_{\zeta}(\fg )$-mod with $\Dcat_{\chi_{0}}$.  

\begin{theorem}\label{T:restrictedquantumgroup}  In each of the categories of
$u_{\zeta}(\fg )\text{-mod}$, $\Fcat$, and $\Dcat$, the
  ideal $\Proj$ admits a unique nontrivial right trace.
\end{theorem}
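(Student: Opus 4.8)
The plan is to verify the hypotheses of Corollary~\ref{C:PcoverC} for each of the three categories $u_\zeta(\fg)\text{-mod}$, $\Fcat$, and $\Dcat$, and then invoke it. Since $\Fcat$ and $\Dcat$ both contain $u_\zeta(\fg)\text{-mod}$ as a full pivotal subcategory (via the restriction functor from $\Fcat$, and via the identification $u_\zeta(\fg)\text{-mod} \cong \Dcat_{\chi_0}$), and since the notion of right ambi is local—it depends only on $\End_\cat(V\otimes V^*)$, as noted after Theorem~\ref{T:maintheorem}—it suffices to produce a single absolutely simple projective object $L$ in $u_\zeta(\fg)\text{-mod}$ which is right ambi, and to check that $L$ (regarded in the larger categories) still generates the ideal $\Proj$. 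For the ambidexterity of $L$ inside $u_\zeta(\fg)\text{-mod}$, I would apply Corollary~\ref{C:PcoverC}, which reduces the problem to two facts: the existence of an absolutely simple projective object, and unimodularity of the category.

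First I would recall that $u_\zeta(\fg)$ is a finite-dimensional Hopf algebra (defined in \cite{CP}), and that the quantum Steinberg module $\St_\zeta$—the simple module of highest weight $(l-1)\rho$—is simple and projective as a $u_\zeta(\fg)$-module; this is standard (the root-of-unity analogue of \cite[Proposition II.10.2]{Ja}). It is absolutely simple since $\C$ is algebraically closed and $\End(\St_\zeta)=\C$. Next, unimodularity: by Lemma~\ref{L:selfduality} the category $u_\zeta(\fg)\text{-mod}$ is unimodular exactly when $u_\zeta(\fg)$ is a unimodular Hopf algebra, and it is a known fact that the small quantum group $u_\zeta(\fg)$ at an odd root of unity is unimodular—indeed it is a symmetric algebra—which one can cite (e.g.\ the integral of $u_\zeta(\fg)$ is two-sided; see work of Kerler, or the general Drinfeld-double/factorizability argument since $u_\zeta(\fg)$ is a quotient of a factorizable quantum group). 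The remaining hypotheses of Corollary~\ref{C:PcoverC} (abelian pivotal $\kk$-category, indecomposables absolutely indecomposable, nilpotent radicals, enough projectives) hold because $u_\zeta(\fg)\text{-mod}$ is the module category of a finite-dimensional algebra over an algebraically closed field, and pivotality follows since $\St_\zeta$ exists and $S^2$ is inner. Thus $\St_\zeta$ is right ambi in $u_\zeta(\fg)\text{-mod}$. To finish in $u_\zeta(\fg)\text{-mod}$ I would check $\tev_{\St_\zeta}$ is an epimorphism: since $\St_\zeta$ is self-dual (its highest weight $(l-1)\rho$ is fixed by $-w_0$) and simple, $\St_\zeta\otimes\St_\zeta^*$ has $\unit$ as a composition factor and the evaluation is nonzero, hence surjective onto the simple $\unit$. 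So $\ideal_{\St_\zeta}=\Proj$ and Theorem~\ref{T:idealtheorem} gives a unique nonzero right trace on $\Proj$ in $u_\zeta(\fg)\text{-mod}$.

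For $\Fcat$ and $\Dcat$ I would argue as follows. Both are abelian pivotal $\kk$-categories with the listed finiteness properties (each is the category of finite-dimensional modules over $U_\zeta(\fg)$, resp.\ $\mathcal{U}_\zeta(\fg)$, subject to semisimplicity of the central elements, which does not destroy the local/finite-length structure on each block). Under restriction, $\St_\zeta$ is still projective in $\Fcat$—projectivity for $U_\zeta(\fg)$ follows since $u_\zeta(\fg)$ has finite global dimension issues replaced by the fact that $U_\zeta(\fg)$ is free over $u_\zeta(\fg)$ in the appropriate sense, or more directly because $\St_\zeta$ is known to be a projective (and injective) $U_\zeta(\fg)$-module (a tilting module of fundamental importance)—and it remains absolutely simple. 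By locality of the ambi condition, $\St_\zeta$ is right ambi in $\Fcat$, and likewise in $\Dcat$ via $\Dcat_{\chi_0}\cong u_\zeta(\fg)\text{-mod}$. Finally one checks $\ideal_{\St_\zeta}=\Proj$ in each: $\tev_{\St_\zeta}$ is still an epimorphism (same argument, $\St_\zeta$ self-dual and $\unit$ simple), so \cite[Lemma 12]{GPV} applies. Theorem~\ref{T:idealtheorem} then yields the unique nonzero right trace on $\Proj$ in all three categories.

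The main obstacle I anticipate is verifying the technical hypothesis that $\Proj$ is actually a nonzero ideal and that $\ideal_{\St_\zeta}$ equals it in $\Fcat$ and $\Dcat$: the categories $\Fcat$ and $\Dcat$ are infinite direct sums of blocks (indexed by central characters), so "enough projectives" and the coincidence of projectives with injectives must be checked blockwise, and one must confirm $\St_\zeta$—living in a single block—still sees every block of $\Proj$ through tensoring with suitable objects. This is where I would spend the most care, likely citing the block decomposition and the fact that tensoring $\St_\zeta$ with a module landing in the $\chi$-block produces a projective generator of the projectives in that block, using that $\St_\zeta$ tensored with anything is projective.
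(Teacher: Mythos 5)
Your approach matches the paper's essentially step for step: apply Corollary~\ref{C:PcoverC} in $u_\zeta(\fg)$-mod (simple projective Steinberg plus unimodularity), then propagate to $\Fcat$ and $\Dcat$ by the locality of the right-ambi condition and projectivity of $\St$ in the larger categories. The paper supplies the specific citations you leave as ``known facts'': unimodularity of $u_\zeta(\fg)$ is \cite[Theorem 2.2]{Kum}, the simple projective Steinberg object is \cite[Proposition 4.1]{Kum} for $u_\zeta(\fg)$-mod, and \cite[Theorem 9.8]{APW} for $U_\zeta(\fg)$-mod; the passage to $\Dcat$ is exactly your inflation argument via $\Dcat_{\chi_0}\cong u_\zeta(\fg)$-mod. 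Your worry in the final paragraph about $\ideal_{\St}=\Proj$ across the block decomposition of $\Fcat$ and $\Dcat$ is legitimate but is already resolved by \cite[Lemma 12]{GPV} together with the observation you make yourself, that $\tev_{\St}$ is nonzero and $\unit$ is simple, hence $\tev_{\St}$ is epi; the paper simply invokes this without spelling it out. So the proposal is correct and not a genuinely different route; if anything it is slightly more explicit at the $\tev$-epimorphism step.
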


\begin{proof}  We first consider $u_{\zeta}(\fg )$-mod.  By \cite[Theorem 2.2]{Kum} $u_{\zeta}(\fg )$ is unimodular so the category $u_{\zeta}(\fg )$-mod is unimodular.  By \cite[Proposition 4.1]{Kum} $u_{\zeta}(\fg )$ has a simple projective object, $\St$, called the Steinberg representation. Thus by Corollary~\ref{C:PcoverC} the module $\St$ is ambidextrous and defines a unique nontrivial trace on $\Proj$ in $u_{\zeta}(\fg )$-mod.

We now consider the restricted quantum group $U_{\zeta}(\fg )$.  By \cite[Theorem 9.8]{APW} the Steinberg module $\St$ is in fact a simple projective object for $U_{\zeta}(\fg )$.  Since upon restriction to $u_{\zeta}(\fg )$ the module $\St$ is ambidextrous, it follows that $\St$ is ambidextrous in $\Fcat$, as well.  In particular, this implies $\Proj$ in $\Fcat$ admits a unique nontrivial trace.

Finally, let us consider the non-restricted quantum group $\mathcal{U}_{\zeta}(\fg )$.  Via the inflation functor, $\St$ defines a simple projective $\mathcal{U}_{\zeta}(\fg )$-module in $\Dcat_{\chi_{0}}$, hence in $\Dcat$.  Thus $\Proj$ in $\cat$ admits a unique nontrivial trace.
\end{proof}

Let us point out that this example has particular importance in low-dimensional topology.  The first and third author showed in \cite{GP} that the non-restricted quantum group $\mathcal{U}_{\zeta}(\fg )$ admits a nontrivial trace on the ideal $\Proj$ in $\Dcat$ by explicit calculations for certain ``typical'' modules.  As discussed in the introduction, this provides a number of 
results in the theory of 3-manifold invariants.   It is also worth noting that their calculations and \cite[Theorem 35]{GP} provided the original motivation for our Theorem~\ref{T:maintheorem}.

\subsection{Complex Lie superalgebras}\label{SS:Liesuperalgebras}  Let $\fg = \fg_{\0} \oplus \fg_{\1}$ be a finite-dimensional classical simple Lie superalgebra defined over the complex numbers.  By \emph{classical} we mean $\fg_{\0}$ is reductive as a Lie algebra.   That is, $\fg$ is a simple Lie superalgebra in the Kac classification \cite{Kac1} of type $ABCDPQ$, $D(2,1;\alpha)$, $F(4)$, or $G(3)$.   In fact what follows works equally well for their non-simple variants:  $\gl$, $\mathfrak{p}(n)$, $\fq(n)$, etc.  

We fix a Cartan subalgebra $\fh$ and Borel subalgebra $\fb$ of $\fg$ such that $\fh_{\0}$ and $\fb_{\0}$ define Cartan and Borel subalgebras of $\fg_{\0}$, respectively. Let $\Fcat$ be the category of all finite-dimensional $\fg$-supermodules which are completely reducible as $\fg_{\0}$-modules and all $\fg$-supermodule homomorphisms which preserve the $\Z_{2}$-grading.  In particular, $\Fcat$ is an abelian ribbon category which contains enough projectives, but nearly always fails to be semisimple. The simple objects of $\Fcat$ are classified up to parity change by their highest weight and we write $L(\lambda)$ for the simple supermodule of highest weight $\lambda \in \fh_{\0}^{*}$.

Note that if $\fg$ is a simple basic classical Lie superalgebra (ie.\ type $ABCD$, $D(2,1;\alpha), $ $F(4)$, or $G(3)$ in the Kac classification), then the typical simple supermodules are simple and projective \cite{Kac2}.  If $\fg$ is a Lie superalgebras of type $Q$, then it again is known to have typical representations which are simple and projective \cite[Lemma 4.51]{Br}. 

Viewing $\fg_{\1}$ as a $\fg_{\0}$-module via the adjoint action, let $\delta$ denote the one-dimensional $\fg_{\0}$-module given by   
\[
\delta = \Lambda^{\operatorname{dim}\fg_{\1}}\left(\fg_{\1} \right).
\]
   We call $\fg$ \emph{unimodular} if $\delta$ is the trivial $\fg_{\0}$-module.  This terminology is justified by the following lemma.  In what follows we abuse notation by writing $\delta \in \fh_{\0}^{*}$ for the weight of $\delta$.

Note that the proof of the lemma is an adaptation of an argument from \cite{MM}. In that paper they show quite a bit more than we need here; namely, they prove that most blocks of $\Fcat$ (and parabolic category $\mathcal{O}$) are symmetric categories.  In particular, their results answer many cases of a question raised in \cite{BKN3}. 

\begin{lemma}\label{L:unimodularLieSuperalgebra}  Let $\fg$ be a classical Lie superalgebra and let $\delta = \Lambda^{\operatorname{dim}\fg_{\1}}\left(\fg_{\1} \right)$.  Let $P_{0}$ be the projective cover in $\Fcat$ of the trivial supermodule.  Then the socle of $P_{0}$ is isomorphic to $L(\delta)$.  In particular, $\Fcat$ is unimodular if and only if $\fg$ is unimodular.
\end{lemma}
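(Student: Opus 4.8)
The plan is to identify the socle of $P_0$ by computing which simple supermodule $L(\mu)$ admits a nonzero map into $P_0$, and to do this by exploiting the adjunction between induction and restriction from $\fg_{\0}$ to $\fg$. First I would recall that, since $\fg_{\0}$ is reductive and we are working with the category $\Fcat$ of finite-dimensional $\fg$-supermodules that are $\fg_{\0}$-semisimple, the functor $\operatorname{Ind}_{\fg_{\0}}^{\fg} = U(\fg) \otimes_{U(\fg_{\0})} (-)$ sends projective (i.e.\ completely reducible) $\fg_{\0}$-modules to projective objects of $\Fcat$, and is both left and right adjoint to restriction up to a twist by $\delta$ — this is the standard fact that $\operatorname{Ind}$ and $\operatorname{Coind}$ differ by tensoring with the top exterior power $\delta$ of $\fg_{\1}$ (a super-analogue of the fact that for a finite-dimensional Lie algebra $\fh \subseteq \fg$, $\operatorname{Coind} \cong \operatorname{Ind}(- \otimes \Lambda^{\mathrm{top}}(\fg/\fh))$). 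Concretely, $\Hom_{\fg}(\operatorname{Ind}(M), N) \cong \Hom_{\fg_{\0}}(M, N)$ while $\Hom_{\fg}(N, \operatorname{Ind}(M)) \cong \Hom_{\fg_{\0}}(N, M \otimes \delta)$.

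Next I would pin down $P_0$. The projective cover of the trivial supermodule $\unit = L(0)$ is a summand of $\operatorname{Ind}_{\fg_{\0}}^{\fg}(\kk)$, the induced module from the trivial $\fg_{\0}$-module: indeed $\Hom_{\fg}(\operatorname{Ind}(\kk), L(0)) = \Hom_{\fg_{\0}}(\kk, L(0)) = \kk \neq 0$, so $\operatorname{Ind}(\kk)$ has $L(0)$ in its head, and since $\operatorname{Ind}(\kk)$ is projective, it has $P_0$ as a direct summand. To compute the socle of $P_0$ it then suffices to compute the socle of $\operatorname{Ind}(\kk)$, or rather to identify which simples embed into it and with what multiplicity, and then cut down to the summand $P_0$. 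Using the other adjunction, $\Hom_{\fg}(L(\mu), \operatorname{Ind}(\kk)) \cong \Hom_{\fg_{\0}}(L(\mu), \kk \otimes \delta) = \Hom_{\fg_{\0}}(L(\mu)|_{\fg_{\0}}, \delta)$. Since $\delta$ is a one-dimensional $\fg_{\0}$-module, this space is nonzero exactly when $\delta$ occurs in the $\fg_{\0}$-socle (equivalently, as a $\fg_{\0}$-constituent) of $L(\mu)|_{\fg_{\0}}$; this forces $\mu$ to be essentially $\delta$, and in particular $L(\delta)$ occurs in $\operatorname{socle}(\operatorname{Ind}(\kk))$. Because projectives in $\Fcat$ are also injective (the category is a module category for a Hopf algebra-like setup — more precisely $\Fcat$ has enough projectives which coincide with injectives), $P_0$ has a simple socle, and the uniqueness forces $\operatorname{socle}(P_0) \cong L(\delta)$. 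One does have to be a little careful that the copy of $L(\delta)$ in the socle of $\operatorname{Ind}(\kk)$ lands in the summand $P_0$ rather than another indecomposable summand; this follows because $\operatorname{Ind}(\kk) \cong P_0^{\oplus m} \oplus (\text{other projectives with other simples in their heads})$ — by Frobenius reciprocity $\Hom(\operatorname{Ind}(\kk), L(\nu)) = \Hom_{\fg_{\0}}(\kk, L(\nu)) \ne 0$ only when the trivial module is a $\fg_{\0}$-constituent of $L(\nu)$, and the self-duality/contravariant-duality structure of $\Fcat$ together with the analogous socle computation shows precisely the summands whose socle is $L(\delta)$ are exactly the $P_0$'s, so the $L(\delta)$ in $\operatorname{socle}(\operatorname{Ind}(\kk))$ lies in the $P_0$-isotypic part.

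Finally, the last sentence of the lemma is immediate from the first: by the setup in Section~\ref{SS:proj}, with $\Pu = P_0$ the projective cover of $\unit$, the category $\Fcat$ is unimodular precisely when the unique simple subobject $L$ of $\Pu$ is isomorphic to $\unit$, i.e.\ when $L(\delta) \cong L(0)$, which (since simples are determined up to parity shift by highest weight) happens exactly when $\delta \cong \kk$ as $\fg_{\0}$-modules, i.e.\ exactly when $\fg$ is unimodular in the sense defined just before the lemma.

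I expect the main obstacle to be the bookkeeping around parity: $L(\delta)$ is only defined up to parity change, and one must check that the relevant adjunction isomorphisms and the identification $\operatorname{Coind} \cong \operatorname{Ind}(- \otimes \delta)$ are compatible with the $\Z_2$-grading conventions in force (homomorphisms are required to be even). A secondary point needing care is justifying that $\delta$ really occurs in $L(\mu)|_{\fg_{\0}}$ only for $\mu$ corresponding to the weight $\delta$ — this uses that $\delta$ is an extremal (lowest, after an appropriate choice) weight among the $\fg_{\0}$-constituents of any $\fg$-module with trivial... more precisely, that a one-dimensional $\fg_{\0}$-submodule of a simple $\fg$-module $L(\mu)$ is forced by highest-weight theory applied to $\fg$; invoking the adaptation of the argument of \cite{MM} cited in the statement handles exactly this point.
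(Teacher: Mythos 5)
Your overall strategy (use the $\operatorname{Ind}$/$\operatorname{Coind}$ adjunction with the twist by $\delta$, then track where $L(\delta)$ shows up) is in the right family, but there is a real gap in the middle, and the paper's proof is structured precisely to avoid it.

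You work \emph{forward}: you start from the projective/injective module $\operatorname{Ind}_{\fg_{\0}}^{\fg}(\kk)$, note $P_{0}$ is a summand, and try to compute its socle via $\Hom_{\fg}(L(\mu),\operatorname{Ind}(\kk))\cong\Hom_{\fg_{\0}}(L(\mu)\vert_{\fg_{\0}},\delta)$. The step where you claim this ``forces $\mu$ to be essentially $\delta$'' is false: the right-hand side is nonzero whenever $\delta$ appears as a $\fg_{\0}$-direct summand of $L(\mu)$, which happens for many $\mu$, not only $\mu=\delta$. (Concretely, for $\gl(1|1)$ one has $\delta\cong\kk$, and the typical simple $L(1,1)$ restricts to $L_{0}(1,1)\oplus L_{0}(0,0)$, so $\Hom_{\fg_{\0}}(L(1,1)\vert_{\fg_{\0}},\delta)\neq 0$ although $(1,1)\neq 0$; indeed $L(1,1)$, being typical hence projective, is a direct summand of $\operatorname{Ind}(\kk)$ whose socle is $L(1,1)$.) So the socle of $\operatorname{Ind}(\kk)$ is not a sum of copies of $L(\delta)$, and your later attempt to single out the $P_{0}$-summands by saying they are ``exactly the summands whose socle is $L(\delta)$'' is circular --- that is precisely the statement to be proved. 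The appeal to \cite{MM} does not rescue this particular line; that reference supports a different (backward) argument.

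The paper proceeds \emph{backward}, and this is the move you are missing. One does not try to compute $\operatorname{socle}(\operatorname{Ind}(\kk))$. Instead: write $P_{0}\cong I(L(\lambda))$ for an a priori unknown $\lambda$, let $L_{0}(\lambda)$ be the corresponding highest-weight $\fg_{\0}$-constituent of $L(\lambda)$, and set $M:=\operatorname{Coind}(L_{0}(\lambda))\cong\operatorname{Ind}(L_{0}(\lambda)\otimes\delta^{*})$. Adjunction gives $\Hom_{\fg}(L(\lambda),M)\cong\Hom_{\fg_{\0}}(L(\lambda)\vert_{\fg_{\0}},L_{0}(\lambda))\neq 0$, so the injective $M$ contains $I(L(\lambda))\cong P_{0}$ as a summand. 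Since $P_{0}$ maps onto $\unit$, $\Hom_{\fg}(M,\kk)\neq 0$; by Frobenius reciprocity applied to the $\operatorname{Ind}$-description of $M$ this gives $\Hom_{\fg_{\0}}(L_{0}(\lambda),\delta)\neq 0$, and now you compare two \emph{simple} $\fg_{\0}$-modules, so Schur forces $L_{0}(\lambda)\cong\delta$ and hence $L(\lambda)\cong L(\delta)$. The crucial difference is that the constraint you extract is $\Hom_{\fg_{\0}}(L_{0}(\lambda),\delta)\neq 0$ (comparing two simples --- this pins down $\lambda$), not $\Hom_{\fg_{\0}}(L(\lambda)\vert_{\fg_{\0}},\delta)\neq 0$ (a much weaker condition, as the $\gl(1|1)$ example shows). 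Your final paragraph about unimodularity, given the socle claim, is fine.
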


\begin{proof}  Let $P_{0}$ denote the projective cover in $\Fcat$ for the trivial module.  By \cite[Proposition 2.2.2]{BKN3} $P_{0}$ is the injective hull for some simple $\fg$-supermodule.  Say $L(\lambda)$ is the simple $\fg$-supermodule with $P_{0}\cong I(L(\lambda))$.  Note that the simple $\fg_{\0}$-module of highest weight $\lambda$, $L_{0}(\lambda)$, appears as a direct summand of $L(\lambda)$ as a $\fg_{\0}$-module.  Consider the $\fg$-supermodule
\[
M:=U(\fg ) \otimes_{U(\fg_{\0})} \left(L_{0}(\lambda) \otimes \delta^{*}  \right)\cong \Hom_{U(\fg_{\0})} \left( U(\fg ), L_{0}(\lambda)\right),
\] where the above isomorphism is as $\fg$-supermodules \cite[Proposition 2.2.1]{BKN3}.

Since restriction and $\Hom_{U(\fg_{\0})}(U(\fg), -)$ form an adjoint pair, we have 
\[
\Hom_{U(\fg )}(L(\lambda), M) \cong \Hom_{U(\fg_{\0})}(L(\lambda), L_{0}(\lambda)) \neq 0.
\] In particular this implies $I(L(\lambda)) \cong P_{0}$ appears as a direct summand of $M$.  Therefore, by Frobenius reciprocity, we have 
\[
0 \neq \Hom_{U(\fg )}(M,\C ) \cong \Hom_{U(\fg_{\0})}(L_{0}(\lambda) \otimes \delta^{*}, \C ) \cong \Hom_{U(\fg_{\0})}(L_{0}(\lambda), \delta).
\]  Thus $L_{0}(\lambda) \cong \delta$ as $\fg_{\0}$-modules.  In particular, $L(\lambda) \cong L(\delta)$.  

The statement on unimodularity now follows as an immediate corollary.

\end{proof}

The importance of $\delta$ was already observed in \cite{Go} where it was proven that all simple classical Lie superalgebras are unimodular.  We now come to the main theorem of the section. 

\begin{theorem}\label{T:ComplexLiesuperalgebras}  Let $\fg$ be a simple classical Lie superalgebra not of type $P$.  The ideal $\Proj$ in the category $\mathcal{F}$ admits a nonzero trace.
\end{theorem}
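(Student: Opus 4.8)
The plan is to deduce this from Corollary~\ref{C:PcoverC}, so the entire task reduces to verifying its two hypotheses in the category $\Fcat$: that $\Fcat$ is unimodular and that $\Fcat$ contains an absolutely simple projective object. The background assumptions of Corollary~\ref{C:PcoverC} (abelian pivotal \kt category over a field, absolute indecomposability, nilpotence of the radical, enough projectives) are standard for this category of finite-dimensional $\fg$-supermodules completely reducible over $\fg_{\0}$ and will be quoted; as noted in Section~\ref{S:Examples} they are ``well known in each case and left to the reader.''

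First I would invoke Lemma~\ref{L:unimodularLieSuperalgebra}: $\Fcat$ is unimodular if and only if $\fg$ is unimodular, i.e.\ $\delta = \Lambda^{\dim \fg_{\1}}(\fg_{\1})$ is the trivial $\fg_{\0}$-module. Then I would cite Gorelik's result \cite{Go}, already mentioned in the text, that every simple classical Lie superalgebra is unimodular. This takes care of the unimodularity hypothesis for all the types in the Kac classification, including the ones excluded from the theorem's conclusion; the exclusion of type $P$ is not about unimodularity.

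Second, I need an absolutely simple projective object in $\Fcat$. This is where the hypothesis ``not of type $P$'' enters. For $\fg$ basic classical (types $ABCD$, $D(2,1;\alpha)$, $F(4)$, $G(3)$) the text already recalls that typical simple supermodules are simple and projective \cite{Kac2}, and for type $Q$ there are typical representations which are simple and projective \cite[Lemma 4.51]{Br}. So for every simple classical $\fg$ except type $P$ there is a simple (hence, over $\C$, absolutely simple) projective module $L$. Having produced $L$ and established unimodularity, Corollary~\ref{C:PcoverC} immediately gives that $L$ is right ambidextrous, and Theorem~\ref{T:idealtheorem} then yields a unique (up to scalar) nonzero right trace on $\ideal_{L}$. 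Finally, since $\Fcat$ is a ribbon category, $\ideal_L$ and the ideal $\Proj$ of projectives coincide under the usual conditions and the right trace is two-sided; alternatively one argues directly that $L \otimes L^{*}$ (a projective object, since $L$ is projective and the tensor product is exact) generates $\Proj$, so $\ideal_{L} = \Proj$, giving the stated nonzero trace on $\Proj$.

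The only genuine subtlety — and the place I would be most careful — is that for type $P=\mathfrak{p}(n)$ no typical-style simple projective module exists, which is precisely why the theorem excludes it; so the ``main obstacle'' is really just making sure the existence of a simple projective is invoked from the literature in exactly the cases claimed, rather than overclaiming. A secondary point to state cleanly is the identification $\ideal_{L} = \Proj$: one should note $L$ projective forces $\tev_{L}\colon L \otimes L^{*} \to \unit$ to be an epimorphism (it is nonzero and $\unit$ is simple), so \cite[Lemma 12]{GPV} applies and $\ideal_L = \Proj$, exactly as in the proof of Corollary~\ref{C:PcoverC}. Everything else is a direct citation chain with no computation.
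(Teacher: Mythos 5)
Your proof is correct and follows essentially the same route as the paper: reduce unimodularity of $\Fcat$ to unimodularity of $\fg$ via Lemma~\ref{L:unimodularLieSuperalgebra}, cite Gorelik for the latter, cite Kac and Brundan for the existence of simple projective (typical) supermodules outside type $P$, and then apply Corollary~\ref{C:PcoverC}. The only difference is that you spell out why $\tev_{L}$ is an epimorphism and why $\ideal_{L}=\Proj$, which the paper leaves implicit; that is a harmless bit of extra care, not a different argument.
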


\begin{proof}  As mentioned above, by \cite[Section 3.4.3]{Go} we have that $\fg$ is unimodular for any simple classical Lie superalgebra and so by the previous lemma $\Fcat$ is unimodular.  As discussed above the so-called typical simple supermodules for the simple basic classical Lie superalgebras and for the type Q Lie superalgebras are simple and projective.  The result follows.
\end{proof}

We remark that in the case of type A and C the above result was first proven in \cite{GP2}.  There the authors give an explicit formula for the trace in terms of supercharacters. They also use deformation arguments to obtain ambidextrous objects for the Drinfeld-Jimbo quantum group over $\C[[h]]$ associated to $\fg$ which, in turn, can be used to define link invariants.

\subsection{Lie superalgebras in positive characteristic}\label{SS:Liesupalgebras}  Let $\kk$ be an algebraically closed field of characteristic $p$ and let $\fg$ be a restricted Lie superalgebra; that is, a Lie superalgebra defined over the field $\kk$ such that $\fg_{\0}$ is a restricted Lie algebra and $\fg_{\1}$ is a restricted $\fg_{\0}$-module via the adjoint action.  
In the following theorem we assume $\fg$ is one of the following restricted Lie superalgebras: $\gl$, $\fq (n)$, or a simple Lie superalgebra of type ABCD, $D(2,1;\alpha)$, $G(3)$, or $F(4)$.  We assume that the characteristic of the field $\kk$ is an odd prime and, in addition, greater than three if $\fg$ is of type $D(2,1;\alpha)$ or $G(3)$.  Let $\Fcat$ be the category of finite-dimensional $\fg$-supermodules on which the central elements $x^{p}-x^{[p]}$ ($x \in \fg_{\0}$) act semisimply.  We take as morphisms the $\Z_{2}$-grading preserving $\fg$-module homomorphisms.

\begin{theorem}  Let $\fg$ be as above.   Then the ideal $\Proj$ in $\Fcat$ admits a unique nontrivial trace.
\end{theorem}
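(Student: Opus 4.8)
The plan is to apply Corollary~\ref{C:PcoverC} directly to $\Fcat$. Exactly as in Section~\ref{SS:Liealgebras}, for each $x\in\fg_{\0}$ the element $x^{p}-x^{[p]}$ is primitive and central in $U(\fg)$, so $\Fcat$ decomposes into blocks $\Fcat=\bigoplus_{\chi}\Fcat_{\chi}$ indexed by the algebra homomorphisms $\chi\colon\mathcal{Z}\to\kk$ of the subalgebra $\mathcal{Z}\subseteq U(\fg)$ generated by these elements, and the principal block $\Fcat_{0}$, consisting of the supermodules annihilated by every $x^{p}-x^{[p]}$, is the category of finite-dimensional supermodules over the restricted enveloping superalgebra $u(\fg)$, i.e.\ the quotient of $U(\fg)$ by the ideal generated by the elements $x^{p}-x^{[p]}$ for $x\in\fg_{\0}$. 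Since $u(\fg)$ is a finite-dimensional Hopf superalgebra, $\Fcat$ is an abelian pivotal (in fact, as remarked above, ribbon) \kt category with enough projectives in which projectives and injectives coincide, and the Krull--Schmidt hypotheses of Section~\ref{SS:MainTheorem} hold. As $\unit$ lies in $\Fcat_{0}$, its projective cover $\Pu$ lies in $\Fcat_{0}$ and is the projective cover of $\unit$ there as well; hence $\Fcat$ is unimodular precisely when $\Fcat_{0}$ is. It therefore suffices to establish: (a) $\Fcat_{0}$ is unimodular; and (b) $\Fcat$ contains an absolutely simple projective object $\St$ with $\tev_{\St}$ an epimorphism. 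Granting (a) and (b), Corollary~\ref{C:PcoverC} produces the desired unique nonzero trace on $\Proj$ in $\Fcat$.

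For (b) one takes $\St$ to be the ``super Steinberg'' supermodule, the analogue of the Steinberg module of \cite[Proposition II.10.2]{Ja}: an absolutely simple projective $u(\fg)$-supermodule, of dimension $p^{N_{0}}2^{N_{1}}$ with $N_{0}$ and $N_{1}$ the numbers of positive even and odd roots. Its existence, simplicity, and projectivity for each $\fg$ in the list follow from the modular representation theory of basic classical Lie superalgebras (respectively of $\fq(n)$), and this is where the hypotheses on $\operatorname{char}\kk$ are used. Since $\St$ lies in the block $\Fcat_{0}$ it is projective in $\Fcat$, and $\tev_{\St}\colon\St\otimes\St^{*}\to\unit$ is nonzero, hence an epimorphism because $\unit$ is simple. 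For (a) I would follow the proof of Lemma~\ref{L:unimodularLieSuperalgebra} verbatim, replacing $U(\fg)$ and $U(\fg_{\0})$ by $u(\fg)$ and $u(\fg_{\0})$: the socle of the projective cover $\Pu$ of the trivial $u(\fg)$-supermodule is isomorphic to $L(\delta)$, where $\delta=\Lambda^{\dim\fg_{\1}}(\fg_{\1})$ is regarded as a one-dimensional restricted $u(\fg_{\0})$-module. The argument carries over once one has the restricted, $\delta$-twisted Frobenius reciprocity $u(\fg)\otimes_{u(\fg_{\0})}(M\otimes\delta^{*})\cong\Hom_{u(\fg_{\0})}(u(\fg),M)$ --- which holds because $u(\fg)$ is free over $u(\fg_{\0})$ on a PBW basis in the odd variables whose top monomial carries the twist by $\delta$ --- together with the self-injectivity of $u(\fg)$. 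Finally, $\delta$ is trivial for every $\fg$ in the list: its weight equals $\sum_{\beta\in\Delta_{\1}}\beta=0$, since the odd root system is invariant under $\beta\mapsto-\beta$ in each of these types, and this is characteristic-free. The type $P$ superalgebras, whose odd root systems lack this symmetry, are non-unimodular and are excluded exactly as in Theorem~\ref{T:ComplexLiesuperalgebras}.

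The main obstacle is step (a), the positive-characteristic version of Lemma~\ref{L:unimodularLieSuperalgebra}: its complex proof rests on the homological results of \cite{BKN3} --- the coincidence of projectives and injectives for the relevant algebra and the precise $\delta$-twisted induction--coinduction isomorphism --- and one must check that these inputs persist for the finite-dimensional Hopf superalgebra $u(\fg)$ over $\kk$. Once the socle of $\Pu$ is identified with $L(\delta)$, the reduction of unimodularity to the vanishing of $\delta$ and the verification of that vanishing are routine, and the assembly via Corollary~\ref{C:PcoverC} is as described above.
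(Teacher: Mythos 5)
Your high-level route is exactly the paper's: reduce everything to Corollary~\ref{C:PcoverC} and verify (a) unimodularity of $\Fcat$ and (b) the existence of an absolutely simple projective $\St$ with $\tev_{\St}$ epi (the latter is automatic since $\tev_{\St}\neq 0$ and $\unit$ is simple, as you note). The difference is how the two hypotheses are established. The paper simply cites: for unimodularity, Wang--Zhao \cite[Proposition 2.1]{WZ1} in the type $Q$ case and \cite[Proposition 2.7]{WZ2} otherwise (these show the projective cover of the trivial supermodule is self-dual); for the simple projective, \cite[Theorem 3.10]{WZ1} in type $Q$ and Zhao \cite[Theorem 4.7]{Z} otherwise (which construct a Steinberg-type supermodule and prove it is simple and projective, under exactly the hypotheses on $\operatorname{char}\kk$ the theorem imposes). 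You instead try to rebuild both facts from scratch: (b) by positing a ``super Steinberg'' module, and (a) by transporting Lemma~\ref{L:unimodularLieSuperalgebra} to $u(\fg)$-mod and then checking $\delta=\sum_{\beta\in\roots_{\1}}\beta=0$.

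Your reconstruction is morally sound --- indeed it mirrors the arguments actually used in \cite{WZ1,WZ2,Z} --- but as written it has the gap you yourself flag. For step (a), the $\delta$-twisted Frobenius isomorphism
$u(\fg)\otimes_{u(\fg_{\0})}(M\otimes\delta^{*})\cong\Hom_{u(\fg_{\0})}(u(\fg),M)$
together with the identification of projectives and injectives in the ambient category are precisely the homological inputs you would need to re-verify; they hold (this is the content of \cite[Proposition 2.7]{WZ2}, respectively the Frobenius property of $u(\fg)$ as a finite-dimensional Hopf superalgebra), but you have only sketched why. Similarly, for step (b), the existence, absolute simplicity, and projectivity of the Steinberg supermodule is a nontrivial theorem --- it is not immediate from the classical $u(\fg_{\0})$ Steinberg module, and the constraints $p>2$ (and $p>3$ for $D(2,1;\alpha)$, $G(3)$) enter exactly here --- and you assert it without proof or citation. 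One further small caution: your reduction of unimodularity of $\Fcat$ to unimodularity of $\Fcat_{0}$, and your claim that $\delta$ vanishes ``characteristic-freely'' because the odd root system is symmetric under negation, are both correct for the types in the list (and this is exactly what rules out type $P$), but a complete proof should state why $\fq(n)$ fits the pattern (the odd part is the adjoint module, so its weights occur in $\pm$ pairs and $\delta=0$). In short: same strategy as the paper, but the paper closes the two verifications by citing \cite{WZ1,WZ2,Z}, whereas your write-up leaves them as (acknowledged) gaps.
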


\begin{proof}  If $\fg$ is $\fq(n)$, then by \cite[Proposition 2.1]{WZ1} it follows that the projective cover of the trivial module is self-dual.  In the other cases it follows by \cite[Proposition 2.7]{WZ2}.  Thus $\Fcat$ is unimodular.  If $\fg$ is $\fq (n)$, then by \cite[Theorem 3.10]{WZ1} we have projective simple objects in $\Fcat$.  In the other cases it follows by \cite[Theorem 4.7]{Z}.  
\end{proof}

\linespread{1}

\end{document}